\DeclareMathOperator{\orth}{orth}
\DeclareMathOperator{\Inn}{Inn}
\DeclareMathOperator{\ad}{ad}
\DeclareMathOperator{\Rad}{Rad}
\DeclareMathOperator{\Ad}{Ad}
\DeclareMathOperator{\Aut}{Aut}
\DeclareMathOperator{\Isom}{Isom}
\DeclareMathOperator{\Lev}{Lev}
\DeclareMathOperator{\SL}{SL}
\DeclareMathOperator{\SO}{SO}
\DeclareMathOperator{\Nil}{Nil}
\DeclareMathOperator{\nc}{nc}
\DeclareMathOperator{\cop}{c}
\def\mfg{\mathfrak{g}}
\def\mfm{\mathfrak{m}}
\def\mfh{\mathfrak{h}}
\def\mfq{\mathfrak{q}}
\def\mfl{\mathfrak{l}}
\def\mff{\mathfrak{f}}
\def\mfw{\mathfrak{w}}
\def\mfu{\mathfrak{u}}
\def\mfv{\mathfrak{v}}
\def\mfk{\mathfrak{k}}
\def\mfp{\mathfrak{p}}
\def\mfz{\mathfrak{z}}
\def\mfs{\mathfrak{s}}
\def\mfn{\mathfrak{n}}
\def\mfo{\mathfrak{o}}
\def\ov{\overline}
\def\ovphi{\overline{\Phi}}
\def\ovla{\overline{L}_a}
\def\ovphi{\overline{\Phi}}
\def\ovra{\overline{R}_a}
\def\ovia{\overline{I}_a}
\def\hg{\widehat{G}}
\def\hmfg{\widehat{\mfg}}
\def\la{\langle}
\def\ra{\rangle}
\def\ip{\la \cdot,\,\cdot\ra}
\def\R{\mathbf{R}}
\def\t{\widetilde}
\def\lg{\Lev(\mfg)}
\def\lgnc{\Lev(\mfg)_{\nc}}
\def\lGnc{\Lev(G)_{\nc}}
\def\lgc{\Lev(\mfg)_{\cop}}
\def\wt{\widetilde}
\renewenvironment{proof}[1][Proof]{\textbf{#1.} }
{\ \rule{0.5em}{0.5em}}
\numberwithin{equation}{section}
\newtheorem{theorem}{Theorem}[section]
\newtheorem{prop}{Proposition}[section]
\newtheorem{lemma}{Lemma}[section]
\newtheorem{corollary}{Corollary}[section]
\theoremstyle{definition}
\newtheorem{definition}{Definition}[section]
\newtheorem{remark}{Remark}[section]
\newtheorem{example}{Example}[section]
\newtheorem{notation}{Notation}[section]
\newtheorem{notarem}{Notation and Remarks}[section]
\begin{document}

\title
[G.O. Structures on $\R^n$] {Geodesic Orbit Riemannian Structures on $\R^n$}

\author{Carolyn~S.~Gordon}
\address{C.\,S. Gordon\newline
Dartmouth College, Hanover, NH 03755-1808, USA}
\email{carolyn.s.gordon@dartmouth.edu}
\author{Yuri\u{i}~G.~Nikonorov}
\address{Yu.\,G. Nikonorov\newline
Southern Mathematical Institute of the Vladikavkaz \newline
Scientific Centre of the Russian Academy of Sciences,\newline
Vladikavkaz, Markus str. 22, 362027, Russia}
\email{nikonorov2006@mail.ru}

\begin{abstract}

A geodesic orbit manifold is a complete Riemannian manifold all of whose geodesics are orbits of one-parameter groups of isometries.
We give both a geometric and an algebraic characterization of geodesic orbit manifolds that are diffeomorphic to~$\R^n$.
Along the way, we establish various structural properties of more general geodesic orbit manifolds.

\vspace{2mm} \noindent 2010 Mathematical Subject Classification:
53C20, 53C25, 53C35.

\vspace{2mm} \noindent Key words and phrases: homogeneous Riemannian manifolds,  homogeneous spaces,
geodesic orbit spaces, geodesic orbit manifolds.
\end{abstract}

\maketitle

\thispagestyle{empty}

\section{Introduction}

A complete Riemannian manifold $(M,g)$ is said to be a \emph{geodesic orbit} manifold, abbreviated {\it G.O. manifold},    if every
geodesic is an orbit of a one-parameter group of isometries.  G.O. manifolds are necessarily homogeneous.  Among their nice geometric properties,
every G.O. manifold is a ``D'Atri space'', i.e., the local geodesic symmetries are volume preserving up to sign (see \cite{KPV}).   While the G.O.
property results in strong restrictions both on the structure of the isometry group and on the geometry, the class of G.O. manifolds is nonetheless
large enough to admit many interesting classes of homogeneous Riemannian metrics.
We refer to \cite{KV}, \cite{Arv}, and \cite{Nik2016} for expositions on general properties of geodesic
orbit Riemannian manifolds and historical surveys.

The primary goal of this article is to study both the geometry and the symmetry properties of G.O. manifolds that are diffeomorphic to $\R^n$.
Our main results, included in Theorems ~\ref{thm.solv} and \ref{thm.submers}, are summarized in Theorem~\ref{thm.main}.
First recall that a simply-connected nilpotent Lie group with a left-invariant metric is called a \emph{Riemannian nilmanifold}.
By a result of \cite{Gor96}, every G.O. nilmanifold has step size at most two.

\begin{theorem}\label{thm.main} Let $(M,g)$ be a G.O. manifold diffeomorphic to $\R^n$.  Then:
\begin{enumerate}
\item $(M,g)$ is the total space of a Riemannian submersion $\pi:M\to P$, where the base space $P$ is a Riemannian symmetric space of noncompact type.  The fibers are totally geodesic and are isometric to a G.O. nilmanifold $(N,g)$ of step size at most two.

\item $M$ admits a simply-transitive solvable group of isometries of the form $S\times N$ where $S$ is an Iwasawa subgroup of a semisimple Lie group and
$N$ is the group in {\rm(1)}.

\end{enumerate}

\end{theorem}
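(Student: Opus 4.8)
The plan is to study the identity component $G$ of the isometry group $\Isom(M,g)$, which acts transitively because $(M,g)$ is homogeneous, and to write $M=G/K$ with $K$ the isotropy subgroup at a basepoint $o$. Since $M$ is diffeomorphic to $\R^n$ it is contractible, and a standard argument (via the fibration $K\to G\to M$ together with the fact that $G$ retracts onto a maximal compact subgroup) shows that $K$ is itself a \emph{maximal} compact subgroup of $G$; this topological input is what ultimately forces the noncompact type. I then fix a reductive decomposition $\mfg=\mfk\oplus\mfm$ with $\mfk=\Lie(K)$, so that the G.O. hypothesis is encoded in the geodesic orbit equation: for each $X\in\mfm$ there exists $A\in\mfk$ with $\la[A+X,Y]_\mfm,X\ra=0$ for all $Y\in\mfm$, the Killing field $A+X$ generating the geodesic $t\mapsto\Exp(tX)\cdot o$. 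Next I would invoke the Levi decomposition $\mfg=\lg\oplus\mfrg$, split the Levi factor $\lg=\lgnc\oplus\lgc$ into its noncompact and compact parts, and show as a first step that $\lgc\subseteq\mfk$ up to conjugacy (any compact semisimple subgroup lies in a maximal compact subgroup, and all such are conjugate to $K$), so that only $\lgnc$ and $\mfrg$ contribute transitive directions.

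With the compact factor absorbed into the isotropy, the second step is to construct the fibration $\pi\colon M\to P$. The noncompact semisimple subalgebra $\lgnc$ integrates to a subgroup $L_{\nc}\le G$, and $P:=L_{\nc}/(L_{\nc}\cap K)$ carries the invariant metric induced from $g$, with $L_{\nc}\cap K$ maximal compact in $L_{\nc}$. I would then show that a G.O. metric on such a semisimple quotient with maximal compact isotropy must be the symmetric one, by feeding the geodesic orbit equation restricted to $\lgnc$ into the structure of the isotropy representation; thus $P$ is a Riemannian symmetric space of noncompact type. The fibers of $\pi$ are the orbits of a nilpotent normal subgroup $N\trianglelefteq G$ with Lie algebra $\mfn\subseteq\mfng$, and the projection is $G$-equivariant. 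That $\pi$ is a Riemannian submersion with totally geodesic fibers is then an O'Neill-type computation: horizontal geodesics must project to geodesics of $P$, and applying the geodesic orbit equation separately to horizontal and vertical vectors forces the second fundamental form of the fibers to vanish. The fiber through $o$, with its induced metric, is a homogeneous G.O. nilmanifold $(N,g)$, hence of step size at most two by the theorem of Gordon cited in the introduction. This would establish part (1).

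For part (2) I would take the Iwasawa decomposition $\lgnc=(\mfk\cap\lgnc)\oplus\mfa\oplus\mfu$ and set $\mfs=\mfa\oplus\mfu$, with $S=\exp(\mfs)$ the associated Iwasawa solvable subgroup of $L_{\nc}$; it acts simply transitively on the symmetric base $P$, so $\dim S=\dim P$. Together with the fiber group $N$ I obtain a solvable subgroup of $G$ of dimension $\dim P+\dim(\text{fiber})=\dim M$ acting transitively, and since $N$ is simply transitive on each fiber while $S$ is simply transitive on the base, the product acts freely, hence simply transitively. The crucial remaining point is to recognize this group as a \emph{direct} product $S\times N$, equivalently to prove that $S$ and $N$ commute, i.e.\ $\ad(\mfs)|_{\mfn}=0$.

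This last decoupling is where I expect the real difficulty to lie. A priori the noncompact semisimple factor acts nontrivially on the radical through $\ad$, and the substance of the theorem is that on the fiber directions $\mfn$ this action is trivial. I would try to extract it from the geodesic orbit equation applied to mixed vectors $X=X_\mfs+X_\mfn$ that are part horizontal, part vertical: the Killing generator $A+X$ must simultaneously generate a geodesic downstairs in $P$ and preserve the totally geodesic fibration upstairs, and reconciling these two constraints should force $[\mfs,\mfn]=0$. Once the commuting is in hand, the dimension count and freeness above yield the simply transitive direct product $S\times N$, completing the proof.
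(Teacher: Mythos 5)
You have the right skeleton---maximal compact isotropy from contractibility, $\lgc$ absorbed into the isotropy, a fibration over a noncompact symmetric space with nilmanifold fibers, and an Iwasawa group times the fiber group acting simply transitively---but the proposal leaves the two genuinely hard steps unproven, and the first of them you explicitly punt on. The crux you flag, $[\mfs,\mfn]=0$, cannot be left at ``reconciling these two constraints should force'' commutation; nothing in your sketch of mixed horizontal/vertical vectors actually produces it. The paper proves this as Proposition~\ref{gnc} by a representation-theoretic argument: applying the G.O. criterion (Lemma~\ref{G.O.-criterion}) with $X\in\Nil(\mfg)$ and $Y$ in the orthogonal complement of $\Nil(\mfg)$ in $\mfm$ gives $\la [Y,X],X\ra=0$, so the subalgebra $\mfo=\{Y\in\mfg:\ \ad(Y)|_{\Nil(\mfg)}\ \mbox{is skew-symmetric}\}$ satisfies $\mfg=\mfo+\Nil(\mfg)$; Levi's theorem then lifts a Levi factor into $\mfo$; and since a semisimple Lie algebra of noncompact type admits no nontrivial representation by skew-symmetric endomorphisms (its image would be a noncompact semisimple subalgebra of the compact algebra $\so(n)$), the noncompact part of that Levi factor annihilates $\Nil(\mfg)$, whence $[\lgnc,\Rad(\mfg)]=0$ using $[\lgnc,\Rad(\mfg)]\subset\Nil(\mfg)$ and complete reducibility. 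Note also that this commuting fact is not only the engine of your part (2): the paper needs $[\mfk,\Nil(\mfg)]=0$ to conclude $\mfp\perp\Nil(\mfg)$ (via $[\mfk,\mfp]=\mfp$ and skew-symmetry of $\ad(\mfk)|_\mfm$), and that orthogonality is exactly what makes $\pi$ a \emph{Riemannian} submersion; so your ``O'Neill-type computation'' in part (1) silently requires it as well.

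The second gap is that you assert the fibers of $\pi$ are orbits of a nilpotent normal subgroup with Lie algebra contained in $\Nil(\mfg)$, i.e., that modulo the isotropy the radical contributes only nilradical directions. This amounts to the decomposition $\mfg=\mfh+\mfs+\Nil(\mfg)$ (Equation~(\ref{eq.hsn}) in the paper), and it is not automatic: a priori the ``vertical'' directions might only be covered by all of $\Rad(\mfg)$, which would give solvmanifold rather than nilmanifold fibers, and without it the map $\pi$ onto $\lGnc/K$ is not even defined. The paper derives this decomposition from (i) contractibility, which forces $[\mfl,\mfl]\subset\mfh$ for a maximal compactly embedded subalgebra $\mfl\supset\mfh$; (ii) the G.O. property via Lemma~\ref{lem.skew}, which makes $\mfh+C_{\mfm}(\mfh)$ compactly embedded and hence, by maximality, $\mfl=\mfh\oplus C(\mfg)\oplus\mff$; and (iii) a Levi factor compatible with $\mfh$ (Lemma~\ref{lem.compact}) together with $[\mfg,\mfg]\subset\Lev(\mfg)+\Nil(\mfg)$. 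Your remaining ingredients---maximal compactness of $K$, conjugating $\lgc$ into $\mfk$, symmetry of the invariant base metric, total geodesy of the fibers (the paper's Lemma~\ref{lem.subman})---are correct and parallel the paper's proof; but without the two structural facts above, neither the fibration of part (1) nor the direct product $S\times N$ of part (2) is established.
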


Some of our additional results include:
\begin{itemize}
\item We correct an error in \cite{Gor96}, Theorem 1.15.   Let $(M,g)$ be a G.O. manifold and let $\Isom(M,g)$ be its full isometry group.  In \cite{Gor96},
the first author incorrectly asserted that if $G$ is any transitive subgroup of $\Isom(M,g)$ with the property that every geodesic in $M$ is an orbit of
a one-parameter subgroup of $G$, then $\Lev(G)\Nil(G)$ acts transitively on $M$, where $\Lev(G)$ and $\Nil(G)$ are a semisimple Levi factor and
the nilradical of $G$, respectively.   The second author~\cite{Nik2016} recently gave a~counterexample.
Theorem~\ref{thm.nil} proves a slightly weakened version of the statement in \cite{Gor96}.

\item Proposition~\ref{proalgonilp1} gives a new result concerning G.O. nilmanifolds.
\end{itemize}

The remainder of these introductory remarks partially reviews the history of G.O. manifolds and provides some context for our results.

The naturally reductive homogeneous Riemannian manifolds (among which are all the Riemannian symmetric spaces) were the first large class of Riemannian manifolds known to have the geodesic orbit property.  Recall that a Riemannian manifold $(M,g)$ is
{\it naturally reductive} if it admits a transitive Lie group $G$ of isometries with a bi-invariant
pseudo-Riemannian metric $g_0$ that induces the metric $g$ on $M = G/H$; see  \cite{Bes}.  In 1984, A.~Kaplan~\cite{Kap} gave the first example of a non-naturally reductive Riemannian manifold that exhibits the geodesic orbit property.
The terminology ``G.O. manifold'' was first introduced in 1991 by O.~Kowalski and L.~Vanhecke~\cite{KV}.   Since that time, a number of additional interesting
classes of metrics have been shown to have the G.O. property.  J.~Berndt, O.~Kowalski, and L.~Vanhecke~\cite{BKV} showed that all \emph{weakly symmetric spaces}
are G.O. manifolds.  The weakly symmetric spaces, introduced by A.~Selberg~\cite{S}, have the defining property that
any two points can be interchanged by
an isometry; these manifolds are closely related to spherical spaces, commutative
spaces, and Gel'fand pairs. (See \cite{AV, Yakimova, W1}.)
Another subclass of G.O. manifolds are the {\it generalized normal homogeneous Riemannian manifolds}, also called
{\it $\delta$-homogeneous manifolds}.
All metrics from this subclass are of non-negative sectional curvature
(see \cite{BerNik, BerNik3, BerNik2012}).
The {\it Clifford--Wolf homogeneous
Riemannian manifolds} are among the generalized normal homogeneous manifolds~\cite{BerNikClif}.

On the other hand, the G.O. condition is quite demanding.  For example, the only G.O. manifolds of negative Ricci curvature are the
noncompact Riemannian symmetric spaces~\cite{Gor96}.   G.O. metrics have been classified among all metrics in various settings,
e.g. on spheres~\cite{Nik2013}, on flag manifolds~\cite{AA} and, more generally, on manifolds with positive Euler characteristic \cite{AN},
and among metrics that fiber over irreducible symmetric spaces~\cite{Ta}.  The article~\cite{Gor96} shows that the G.O. condition imposes
a number of structural conditions on the full isometry group of the metric.

Various constructions of geodesics that are orbits of one-parameter isometry groups, and interesting properties of geodesic orbit metrics
can be found in \cite{CHNT,DuKoNi,Nik2013n,Tam,Zil96} and in the references therein.
For generalizations to pseudo-Riemannian manifolds, see
\cite{Barco,ChenWolf2012,Du2010} and the references therein; for generalizations to Finsler manifolds, see
 \cite{YanDeng}.

\bigskip

The paper is organized as follows:  In Section~\ref{krss}, we provide the needed background on Riemannian homogeneous spaces.  We turn to G.O. manifolds and prove our main results in Section~\ref{mrss}.

\medskip

\section{Background on Riemannian homogeneous spaces}\label{krss}

Throughout both this section and the next, all manifolds will be assumed to be connected.  In this section, we review some elementary properties of isometry groups of homogeneous Riemannian manifolds.

Let $(M, g)$ be a connected homogeneous Riemannian manifold and let $\Isom(M,g)$ be its full isometry group.
Let $G$ be a closed connected transitive subgroup of $\Isom(M,g)$. Since $\Isom(M,g)$ and thus $G$ act effectively on $M$,
the isotropy subgroup $H$ of $G$ at a chosen base point $o$ of $M$
has trivial  intersection with the center $C(G)$.  The mapping $G\to M$ given by $a\mapsto a(o)$ induces a diffeomorphism $G/H\to M$.  Via this identification, we may view $g$ as a Riemannian metric on $G/H$.  The metric is left-invariant; i.e., for $a\in G$, the left-translation $L_a: G\to G$ induces an isometry $(G/H,g)\to (G/H,g)$.  We denote this isometry by $\ov{L_a}:G/H\to G/H$.

\begin{notarem}\label{autorth}
(1) We will always denote the Lie algebra of a Lie group by the corresponding fraktur letter.

(2) For $G/H$ as above, the Killing form $B_{\mfg}$ is negative-definite on $\mfh$ and thus we can write
\begin{equation}\label{reductivedecomposition}
\mathfrak{g}=\mathfrak{h}\oplus \mathfrak{m},
\end{equation}
vector space direct sum,
where $\mathfrak{m}$ is the orthogonal complement of $\mfh$ relative to the Killing form $B_{\mfg}$.   Observe that $\mathfrak{m}$ is an $\Ad_G(H)$-invariant subspace of  $\mfg$ and can be identified with the tangent
space $T_oM$ at the base point $o = eH$.
The Riemannian metric $g$ corresponds to an $\Ad(H)$-invariant inner product $g_{eH}=\ip$ on $\mfm$.   Let $O(\mfm, \ip)$ denote the group of orthogonal transformations of $\mathfrak{m}$.

(3) If $\Phi$ is an automorphism of $G$ that normalizes $H$, then $\Phi$ induces a well-defined diffeomorphism $\ov{\Phi}$ of $G/H$ (and hence of $M$) by $\ov{\Phi}(aH)=\Phi(a)H$.   We write
$$\Aut_{\orth}(G/H,g)=\{\ovphi: \Phi\in\Aut(G), \,\Phi(H)=H, \,\mbox{and}\,\ovphi_{*eH}\in O(\mfm, \ip )\}$$
where $\ovphi_*$ is the differential of $\ovphi$.

\end{notarem}

For completeness, we include the proof of the following elementary and well-known result.

\begin{lemma}\label{lem.autorth}
$\Aut_{\orth}(G/H,g)$ consists of isometries of $(G/H,g)$ and forms the isotropy subgroup at $o$ of the normalizer of $G$ in $\Isom(G/H,g)$.
 \end{lemma}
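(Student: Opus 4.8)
The plan is to build everything on a single equivariance relation. For $\Phi\in\Aut(G)$ with $\Phi(H)=H$, the induced diffeomorphism $\ovphi$ of $G/H$ satisfies
$$\ovphi\circ\ov{L_a}=\ov{L_{\Phi(a)}}\circ\ovphi\qquad(a\in G),$$
which is immediate from $\ovphi(abH)=\Phi(a)\Phi(b)H$. This intertwines the $G$-action with itself through $\Phi$ and drives both assertions. I would then prove the two claims—that $\Aut_{\orth}(G/H,g)$ consists of isometries, and that it equals the isotropy subgroup at $o$ of the normalizer of $G$ in $\Isom(G/H,g)$—the second by mutual containment.

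To see that each $\ovphi\in\Aut_{\orth}(G/H,g)$ is an isometry, I would propagate orthogonality from the base point. Differentiating the equivariance relation at $o$ and using $\ovphi(o)=o$ yields, for any $p=aH$,
$$\ovphi_{*p}=(\ov{L_{\Phi(a)}})_{*o}\circ\ovphi_{*o}\circ\bigl((\ov{L_a})_{*o}\bigr)^{-1}.$$
Since $\ov{L_a}$ and $\ov{L_{\Phi(a)}}$ are isometries, the outer two factors are linear isometries between the relevant tangent spaces, while $\ovphi_{*o}\in O(\mfm,\ip)$ by hypothesis. Hence $\ovphi_{*p}$ is a linear isometry at every $p$, so $\ovphi$ is an isometry of $(G/H,g)$.

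For the identification with the isotropy subgroup $N_o$ at $o$ of the normalizer $N$ of $G$ in $\Isom(G/H,g)$, one containment is quick: each $\ovphi\in\Aut_{\orth}$ fixes $o=eH$ and, by the equivariance relation, satisfies $\ovphi\,\ov{L_a}\,\ovphi^{-1}=\ov{L_{\Phi(a)}}\in G$, so $\ovphi\in N_o$. Conversely, given $\psi\in N_o$, conjugation $a\mapsto\psi a\psi^{-1}$—viewing $G$ as a subgroup of $\Isom(G/H,g)$ via its effective action—is a Lie group automorphism $\Phi$ of $G$, and since $\psi$ fixes $o$ it preserves the stabilizer $H$, giving $\Phi(H)=H$. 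The one point requiring care, and the main bookkeeping obstacle, is checking that the abstractly induced map $\ovphi$ actually coincides with $\psi$: using the identification $aH\leftrightarrow a(o)$ together with $\psi(o)=o$, one computes $\psi(aH)=(\psi a\psi^{-1})(o)=\Phi(a)(o)=\ovphi(aH)$. Finally $\ovphi_{*o}=\psi_{*o}\in O(\mfm,\ip)$ because $\psi$ is an isometry fixing $o$, so $\psi=\ovphi\in\Aut_{\orth}(G/H,g)$. Combining the two containments gives the asserted equality.
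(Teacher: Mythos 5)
Your proof is correct and follows essentially the same route as the paper's: the equivariance relation $\ovphi\circ\ov{L}_a=\ov{L}_{\Phi(a)}\circ\ovphi$, differentiated to propagate the orthogonality at $o$ to every point, and the conjugation argument to show any normalizing isometry fixing $o$ arises as some $\ovphi$. The only difference is that you spell out the verification $\psi=\ovphi$ and the membership $\psi_{*o}\in O(\mfm,\ip)$, which the paper compresses into ``one easily checks.''
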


 \begin{proof}
The automorphism property of $\Phi$ implies that
$$\ovphi\circ \ovla=\ov{L}_{\Phi(a)}\circ\ovphi$$
for all $a\in G$.    Thus
$$\ovphi_{*aH}=(\ov{L}_{\Phi(a)})_{*eH}\circ\ovphi_{*eH}\circ (\ovla)^{-1}_{*aH}.$$
Since the left translations are isometries, it follows that $\ovphi$ preserves the  metric $g$, i.e., $\ovphi\in\Isom(G/H,g)$.   Trivially, $\ovphi$ lies in the isotropy group at $o=eH$.

If $\tau\in \Isom(G/H,g)$ normalizes $G$ and fixes $o$, then conjugation by
$\tau$ in $\Isom(G/H,g)$ restricts to an automorphism $\Phi$ of $G$ normalizing $H$, and one easily checks that $\tau=\ovphi$.
\end{proof}
\smallskip

We will be primarily interested in those $\ovphi\in \Aut_{\orth}(G/H,g)$ such that $\Phi$ is an inner automorphism.   For $a\in G$, let $I_a=L_a\circ R_a\in\Inn(G)$ be the associated inner automorphism, where $R_a(x)=xa^{-1}$.  Suppose that $a$ normalizes $H$ so that $\ovia$ is well-defined.  Since $g$ is left-invariant, we have that $\ovia\in \Isom(G/H,g)$ if and only if $\ovra\in \Isom(G/H,g)$.  If $h\in H$, then $\ov{R}_h$ is trivial and thus
$\ov{I}_h=\ov{L}_h$.  In particular, $\Aut_{\orth}(G/H,g)\cap G =H$, under the identification of elements $a$ of $G$ with isometries $\ovla$ of $G/H$.   We now examine those inner automorphisms that give rise to elements of $\Aut_{\orth}(G/H,g)$ that are not already in $G$.

 Let $N_G(H)$ denote the normalizer of $H$ and let
\begin{equation}\label{W}W=\{a\in N_G(H): \ovia\in \Aut_{\orth}(G/H,g)\}.\end{equation}

\begin{lemma}\label{lem.norm}\hfill\\
We use Notation~\ref{autorth}.
\begin{enumerate}
\item The Lie algebra $N_{\mfg}(\mfh)$ of $N_G(H)$ is the normalizer of $\mfh$ in $\mfg$ and satisfies
 $$N_{\mfg}(\mfh)=\mfh+ (N_{\mfg}(\mfh)\cap \mfm)=\mfh+C_{\mfm}(\mfh)$$
where $C_{\mfm}(\mfh)$ is the centralizer of $\mfh$ in $\mfm$.

\item $[C_{\mfm}(\mfh), \mfm] \subset \mfm$ and $C_{\mfm}(\mfh)$ is a subalgebra of $\mfg$.

\item The group $W$ in Equation~(\ref{W}) has Lie algebra\\
$\mfw=\mfh +(\mfw\cap \mfm)=\mfh+ \{Y\in C_{\mfm}(\mfh): \ad(Y)|_{\mfm}\mbox{\,is skew-symmetric}\}.$  In particular, $\mfw$ is compactly embedded in $\mfg$.

\item $\mfw\cap \mfm$ is a subalgebra of $\mfg$ and can be decomposed into a direct sum of subalgebras $\mfw\cap \mfm= C(\mfg)\oplus\mff$ for some $\mff$, where $C(\mfg)$ is the center of $\mfg$.
\end{enumerate}
\end{lemma}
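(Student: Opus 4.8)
The plan is to handle the four parts in sequence, extracting repeatedly one elementary observation: since $\mfg=\mfh\oplus\mfm$ and $\mfh$ is contained in each of the subspaces $N_\mfg(\mfh)$ and (below) $\mfw$, any subspace $V$ with $\mfh\subseteq V\subseteq\mfg$ splits as $V=\mfh\oplus(V\cap\mfm)$: writing $X\in V$ as $X=X_\mfh+X_\mfm$, we have $X_\mfh\in\mfh\subseteq V$, so $X_\mfm\in V\cap\mfm$. I will use freely that $\mfm$ is $\Ad(H)$-invariant, so $[\mfh,\mfm]\subseteq\mfm$ and $\ad(Z)|_\mfm$ is skew-symmetric for $Z\in\mfh$ (from $\Ad(H)$-invariance of $\ip$), that the Killing form $B_\mfg$ is $\ad$-invariant, and that $\mfh$ and $\mfm$ are $B_\mfg$-orthogonal with $B_\mfg|_\mfh$ nondegenerate. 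For (1) I would start from the standard identification $\Lie(N_G(H))=N_\mfg(\mfh)=\{X:[X,\mfh]\subseteq\mfh\}$; the splitting observation gives $N_\mfg(\mfh)=\mfh\oplus(N_\mfg(\mfh)\cap\mfm)$ at once, and it remains to see $N_\mfg(\mfh)\cap\mfm=C_\mfm(\mfh)$. The inclusion $\supseteq$ is clear; conversely, if $Y\in\mfm$ with $[Y,\mfh]\subseteq\mfh$, then for $Z\in\mfh$ we have $[Y,Z]\in\mfh$ and also $[Y,Z]\in[\mfm,\mfh]\subseteq\mfm$, so $[Y,Z]\in\mfh\cap\mfm=0$, i.e. $Y\in C_\mfm(\mfh)$.

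For (2), to get $[C_\mfm(\mfh),\mfm]\subseteq\mfm$ I would show, for $Y\in C_\mfm(\mfh)$ and $X\in\mfm$, that the $\mfh$-component of $[Y,X]$ vanishes; since $B_\mfg|_\mfh$ is nondegenerate and $\mfh\perp\mfm$, this amounts to $B_\mfg([Y,X],Z)=0$ for all $Z\in\mfh$, which follows from invariance, $B_\mfg([Y,X],Z)=-B_\mfg(X,[Y,Z])=0$, because $[Y,Z]=0$. That $C_\mfm(\mfh)$ is a subalgebra is then immediate: for $Y_1,Y_2\in C_\mfm(\mfh)$ the bracket $[Y_1,Y_2]$ lies in $\mfm$ by the inclusion just proved and centralizes $\mfh$ by the Jacobi identity. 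For (3), the first step is to compute $(\ovia)_{*eH}=\Ad(a)|_\mfm$ for $a\in N_G(H)$ (as $I_a$ fixes $e$ and $\Ad(a)$ preserves $\mfh$ and hence $\mfm$), so that $W=\{a\in N_G(H):\Ad(a)|_\mfm\in O(\mfm,\ip)\}$ is a closed subgroup. I would then observe that each $X\in N_\mfg(\mfh)$ already normalizes $\mfm$ — for $Z\in\mfh$, invariance gives $B_\mfg([X,Y],Z)=-B_\mfg(Y,[X,Z])=0$ since $[X,Z]\in\mfh\perp\mfm$ — so $\Ad(\exp tX)|_\mfm=\exp(t\,\ad(X)|_\mfm)$ and therefore $\mfw=\{X\in N_\mfg(\mfh):\ad(X)|_\mfm\text{ is skew-symmetric}\}$. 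Because $\ad(Z)|_\mfm$ is skew for $Z\in\mfh$, we have $\mfh\subseteq\mfw$, and the splitting observation together with (1) yields $\mfw=\mfh\oplus(\mfw\cap\mfm)$ with $\mfw\cap\mfm=\{Y\in C_\mfm(\mfh):\ad(Y)|_\mfm\text{ skew-symmetric}\}$. Compact embedding I would establish by endowing $\mfg$ with the inner product equal to $-B_\mfg$ on $\mfh$, to $\ip$ on $\mfm$, and making $\mfh\perp\mfm$: each $\ad(X)$ with $X\in\mfw$ is block-diagonal for $\mfh\oplus\mfm$ and skew on each block (on $\mfh$ because $\ad(X)$ is always $B_\mfg$-skew, on $\mfm$ by definition of $\mfw$), so $e^{\ad(\mfw)}$ lies in the orthogonal group of $\mfg$ and has compact closure.

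For (4), $\mfw\cap\mfm$ is a subalgebra because $\mfw=\Lie(W)$ is one and $[\mfw\cap\mfm,\mfw\cap\mfm]\subseteq\mfm$ by (2). I would next check $C(\mfg)\subseteq\mfw\cap\mfm$: a central $X$ has $\ad(X)=0$, hence lies in $\mfw$, and satisfies $B_\mfg(X,\cdot)=\trace(\ad(X)\,\ad(\cdot))=0$, so $X\in\Ker B_\mfg\subseteq\mfm$. Being a subalgebra of the compactly embedded $\mfw$, the algebra $\mfw\cap\mfm$ is itself a compact, hence reductive, Lie algebra, so it splits as $\mfw\cap\mfm=\mfz\oplus[\mfw\cap\mfm,\mfw\cap\mfm]$ with $\mfz$ its center. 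Since $C(\mfg)$ lies in $\mfz$, I would pick a vector-space complement $\mfz'$ to $C(\mfg)$ in $\mfz$ and set $\mff=\mfz'\oplus[\mfw\cap\mfm,\mfw\cap\mfm]$, which is a subalgebra satisfying $\mfw\cap\mfm=C(\mfg)\oplus\mff$. The main obstacle I anticipate is in (3): pinning down the differential $(\ovia)_{*eH}$ and thereby the Lie algebra of the closed subgroup $W$ — in particular the point that every $X\in N_\mfg(\mfh)$ already normalizes $\mfm$, so that $\ad(X)|_\mfm$ is defined and exponentiates — together with assembling the global inner product that exhibits compact embedding. Part (4) is then routine once the reductivity of compact Lie algebras is invoked.
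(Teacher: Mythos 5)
Your proposal is correct and takes essentially the same route as the paper's proof: the same Killing-form orthogonality computations for (1) and (2), the identification of $W$ through Lemma~\ref{lem.autorth} followed by linearization to get $\mfw=\mfh+\{Y\in C_{\mfm}(\mfh):\ad(Y)|_{\mfm}\ \mbox{skew}\}$ in (3), and the use of compact embedding / complete reducibility of $\mfw\cap\mfm$ to split off the ideal $C(\mfg)$ in (4). The only difference is that you make explicit several steps the paper treats as immediate --- the computation $(\ovia)_{*eH}=\Ad(a)|_{\mfm}$, the fact that $N_{\mfg}(\mfh)$ preserves $\mfm$ so that $\ad(X)|_{\mfm}$ exponentiates, and the auxiliary inner product on $\mfg$ witnessing compact embedding --- which is harmless and fills in the implicit details faithfully.
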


\begin{proof}
{\rm 1)} Since $\mfm$ is $\Ad_G(H)$-invariant, we have $[\mfh,\mfm]<\mfm$. Hence $N_{\mfm}(\mfh)=C_{\mfm}(\mfh)$ and (1) follows.

{\rm 2)} Let $X\in C_{\mfm}(\mfh)$ and $Y\in \mfm$.  For all $A\in\mfh$, we have
$$0=B_\mfg([X,A], Y)=-B_\mfg(A, [X,Y]).$$
Recalling Notation~\ref{autorth}, it follows that $[C_{\mfm}(\mfh), \mfm] \subset \mfm$.   This proves the first statement and the second statement is then immediate.

{\rm 3)} By Lemma~\ref{lem.autorth}, we have $$W=\{a\in N_G(H): \ovia\in O(\mfm,\ip)\}$$ and thus
$$\mfw=\{X\in N_{\mfg}(\mfh): \ad(X)|_{\mfm}\mbox{\,is skew-symmetric}\}.$$
(3) now follows from (1).

 {\rm 4)} The fact that $\mfw\cap\mfm$ is a subalgebra of $\mfg$ is immediate from (2).  By the definition of $\mfm$ in Notation~\ref{autorth}(ii) and the fact that the Killing form is negative-definite on $\mfh$, we see that $C(\mfg)<\mfm$.    Since $\mfw\cap\mfm$ is compactly embedded in $\mfg$, it is completely reducible.   Thus there exists a $(\mfw\cap\mfm)$-ideal complementary to the ideal $C(\mfg)$.
 \end{proof}

\begin{prop}\label{prop.gf}  In the notation of Lemma~\ref{lem.norm}, let $F$ be the connected subgroup of $W$ with Lie algebra $\mff$, and let $D:=F\cap C(G)$.   Let
$$
\ov{\Inn}_F=\{\ovia: a\in F\}< \Aut_{\orth}(G/H,g).
$$
Then:
\begin{enumerate}
\item The group $G\times F$ acts isometrically on $(G/H,g)$ via the action
$$
\rho(a,b)=\ovla(a)\circ\ovra(b).
$$
Letting $\Delta:F\to G\times F$ be the diagonal embedding $\Delta(x)=(x,x)$,
the effective kernel of the action $\rho$ is discrete and is given by $\Delta(D)=\{(d,d):d\in D\}$.
Thus $\rho$ gives rise to an effective action, again denoted by $\rho$ of $(G\times F)/\Delta(D)$ on $(G/H,g)$.

\item $(G\times F)/\Delta(D)=G\rtimes \ov{\Inn}_F <\Isom(G/H,g)$.

\item  Let $i:G\to G\times F$ be the inclusion $x\mapsto
(x,e)$.  The isotropy subgroup of $(G\times F)$ at $o=eH$ is given by $i(H)\Delta(F)$.  {\rm(}Note that for $a\in F$, we have $\rho(a,a)=\ovia$.{\rm)}

\item Let $\mfu$ be a subalgebra of $\mfg$ and $\mfv$ a subalgebra of $\mff$ such that $\mfg=\mfh +\mfu +\mfv$
{\rm(}with possibly non-trivial intersections{\rm)} and let $U$ and $V$ be  the corresponding connected subgroups of $G$ and $F$, respectively.
Then the subgroup $U\times V$ of $G\times F$ acts transitively on $G/H$ via the action $\rho$.
\end{enumerate}
\end{prop}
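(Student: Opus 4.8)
The plan is to prove transitivity by showing that the orbit $\mathcal{O}=(U\times V)\cdot o$ of the base point $o=eH$ is simultaneously open and closed in $G/H$; since $G/H$ is connected, this forces $\mathcal{O}=G/H$. Under the action $\rho$ the orbit map is $\Psi\colon U\times V\to G/H$ with $\Psi(u,v)=(\overline{L}_u\circ\overline{R}_v)(o)=uv^{-1}H$, so that $\mathcal{O}=\Psi(U\times V)$.

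First I would show that $\mathcal{O}$ contains a neighborhood of $o$ by differentiating $\Psi$ at $(e,e)$. Let $\pi\colon G\to G/H$ be the projection, so that $\pi_{*e}\colon\mfg\to T_o(G/H)\cong\mfm$ is the projection along $\mfh$ and in particular $\pi_{*e}(\mfh)=0$. Differentiating $\Psi$ in the first slot along $X\in\mfu$ gives $\pi_{*e}(X)$, and in the second slot along $Y\in\mfv$ gives $-\pi_{*e}(Y)$; hence the image of $d\Psi_{(e,e)}$ is $\pi_{*e}(\mfu)+\pi_{*e}(\mfv)$. The hypothesis $\mfg=\mfh+\mfu+\mfv$ combined with $\pi_{*e}(\mfh)=0$ then gives $\pi_{*e}(\mfu)+\pi_{*e}(\mfv)=\pi_{*e}(\mfg)=\mfm$, so $d\Psi_{(e,e)}$ is surjective. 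By the submersion theorem $\Psi$ is open near $(e,e)$, so $\mathcal{O}$ contains an open neighborhood of $o$, and therefore a metric ball $B(o,\epsilon)$ for some $\epsilon>0$.

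The decisive step is to propagate this local openness to all of $\mathcal{O}$ and simultaneously obtain closedness, and here the Riemannian structure is essential. By part~(1) of the proposition $G\times F$, and hence its subgroup $U\times V$, acts on $(G/H,g)$ by isometries. Given $q\in\mathcal{O}$, write $q=\phi(o)$ with $\phi\in\rho(U\times V)$; since $\phi$ is an isometry carrying $\mathcal{O}$ onto itself, $B(q,\epsilon)=\phi(B(o,\epsilon))\subseteq\phi(\mathcal{O})=\mathcal{O}$. Thus the \emph{same} radius $\epsilon$ works at every point of $\mathcal{O}$, which shows at once that $\mathcal{O}$ is open and that it is closed: if $p\in\overline{\mathcal{O}}$, choose $q\in\mathcal{O}$ with $d(p,q)<\epsilon$, so that $p\in B(q,\epsilon)\subseteq\mathcal{O}$. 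Connectedness of $G/H$ then yields $\mathcal{O}=G/H$.

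I expect the genuine obstacle to be precisely this last point rather than the differential computation. The purely algebraic assertion ``$\mfg=\mfh+\mfu+\mfv$ implies $(U\times V)\cdot o=G/H$'' is false in the absence of the metric: one can have connected subgroups whose Lie algebras sum to $\mfg$ yet whose product is a proper subset of $G$ (for instance, in $\SL_2(\R)$ the connected subgroup of lower-triangular matrices times the upper-triangular unipotent subgroup omits every matrix with non-positive upper-left entry, although the two Lie algebras already span $\mathfrak{sl}_2(\R)$). Surjectivity of $d\Psi_{(e,e)}$ only makes the orbit \emph{of $o$} open, and in general the orbits of other points need not be open, so a naive open--closed argument based on the differential alone breaks down. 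Exploiting that $U\times V$ acts by isometries, in order to extract a uniform ball radius $\epsilon$, is exactly what closes this gap.
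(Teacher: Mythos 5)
Your proof of part (4) is correct, and it reaches the conclusion by a route that differs from the paper's in an instructive way. (Like the paper, you treat (1)--(3) as given; the paper dismisses them as standard consequences of Lemma~\ref{lem.norm}, and your appeal to part (1) inside the proof of (4) is consistent with that.) The paper works at the group level: it passes to the enlarged transitive group $\hg:=G\times V$ with isotropy $\hh:=i(H)\Delta(V)$, verifies the Lie-algebra identity $(\mfu\times\mfv)+\hmfh=\hmfg$, and then asserts that this yields the group factorization $\hg=(U\times V)\hh$, i.e.\ transitivity. You instead work directly on $M$: the computation $\pi_{*e}(\mfu)+\pi_{*e}(\mfv)=\mfm$ shows the orbit of $o$ is open, and the uniform-$\epsilon$ argument using that $\rho(U\times V)$ consists of isometries shows the orbit is also closed, whence it is everything by connectedness. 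The two infinitesimal computations are equivalent in content; the substantive difference is that you make explicit the passage from the infinitesimal statement to global transitivity, and your diagnosis that this passage genuinely requires the metric is exactly right. As your $\SL(2,\R)$ example shows (equivalently: the connected lower-triangular subgroup $A$ fails to act transitively on $\SL(2,\R)/N$ even though $\mfa+\mfn=\mathfrak{sl}(2,\R)$), the bare implication ``Lie-algebra sum equals $\hmfg$ $\Rightarrow$ $\hg=(U\times V)\hh$'' is false for general Lie groups; nor can it be rescued here by compactness of $\hh$, since $V$, and hence $\hh$, may be noncompact (cf.\ $F\simeq\t{K}\simeq\R$ in Example~\ref{ex.sl}). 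So the paper's one-line conclusion implicitly relies on precisely the kind of isometric-action argument you spell out (or some equivalent standard fact about isometric actions with an open orbit). What the paper's version buys is brevity and a purely algebraic reduction; what yours buys is a self-contained justification of the step the paper leaves tacit, making clear where the Riemannian structure enters.
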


\begin{proof} Statements (1)--(3) are standard and follow easily from Lemma~\ref{lem.norm}.   For (4), the subgroup $\hg:=G\times V$ of $G\times F$ acts transitively on $G/H$ with isotropy subgroup $\widehat{H}:=i(H)\Delta(V)$, where $\Delta(V)=\{(a,a): a\in V\}$.  The corresponding Lie algebras satisfy $\hmfg=\mfg\times \mfv$ and  $\widehat{\mfh}=(\mfh\times \{0\}) + \Delta_*(\mfv)$.   Since  $(\mfv\times \{0\}) + \Delta_*(\mfv) = (\{0\}\times \mfv) +\Delta_*(\mfv)$, we have
$$
(\mfu\times \mfv )+\widehat{\mfh} =((\mfh + \mfu +\mfv)\times \{0\}) +\Delta_*(\mfv) =\hmfg.
$$
It follows that $\hg =(U\times V)\widehat{H}$ and thus $U\times V$ acts transitively on $\hg/\widehat{H}=G/H$.
  \end{proof}

We now consider the case in which the Lie algebra $\mfw$ coincides with the full normalizer of $\mfh$ in $\mfg$.  Equivalently, $C_{\mfm}(\mfh)=C(\mfg)+\mff$.

\begin{corollary}\label{cor.nilrad} We use the notation of Lemma~\ref{lem.norm} and Propostion~\ref{prop.gf}.
Suppose that $\ad(Y)|_{\mfm}$ is skew-symmetric for all $Y\in C_{\mfm}(\mfh)$.  Let $\Lev(G)$ be a semisimple Levi factor of $G$ and
$\Nil(G)$ the nilradical of $G$.   Set $U=\Lev(G)\Nil(G)$.  {\rm(}Since all semisimple Levi factors are conjugate via elements of the nilradical,
$U$ is independent of the choice of $\Lev(G)$.{\rm)}  Letting $C(F)$ denote the center of $F$,
then $U\times C(F)$ acts transitively and isometrically via $\rho$ on $(G/H,g)$.
The solvable radical of $U\times C(F)$ is the nilpotent Lie group $\Nil(G) \times C(F)$.
\end{corollary}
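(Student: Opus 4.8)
The plan is to deduce the transitivity assertion from Proposition~\ref{prop.gf}(4) and then read off the radical from the Levi structure. Concretely, I would apply Proposition~\ref{prop.gf}(4) with $\mfu=\lg+\mfng$ and $\mfv=C(\mff)$, where $C(\mff)$ denotes the center of $\mff$. Here $\mfu$ is a subalgebra of $\mfg$ since $\lg$ is a Levi factor and $\mfng$ is an ideal, and $\mfv=C(\mff)$ is a subalgebra of $\mff$; the associated connected subgroups are precisely $U=\lG\Nil(G)$ and $C(F)$. Thus everything reduces to verifying the covering identity
\[
\mfg=\mfh+(\lg+\mfng)+C(\mff).
\]

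First I would record the consequences of the hypothesis. By Lemma~\ref{lem.norm}(3) the assumption that $\ad(Y)|_{\mfm}$ is skew-symmetric for all $Y\in C_{\mfm}(\mfh)$ is equivalent to $\mfw=N_{\mfg}(\mfh)$, so $\mfw=\mfh+C_{\mfm}(\mfh)$ and, by Lemma~\ref{lem.norm}(4), $C_{\mfm}(\mfh)=\mfw\cap\mfm=C(\mfg)\oplus\mff$. Since $\mfw$ is compactly embedded, $\mff$ is reductive of compact type, so $\mff=C(\mff)\oplus[\mff,\mff]$ with $[\mff,\mff]$ semisimple. Because $U=\lG\Nil(G)$ is independent of the choice of Levi factor, I may choose $\lg$ so that $[\mff,\mff]\subset\lg$; moreover $C(\mfg)\subset\mfng$, being a central (hence nilpotent) ideal. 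With these inclusions it suffices to prove $\mfrg\subset\mfh+\mfng+C(\mff)+[\mff,\mff]$, since then $\mfg=\lg+\mfrg$ yields the displayed identity.

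The key step — and the one I expect to carry the real content — is the structural claim
\[
\mfrg=\bigl(\mfrg\cap C_{\mfg}(\mfw)\bigr)+\mfng .
\]
To prove it I would use two facts about the $\ad(\mfw)$-action on the ideal $\mfrg$. First, $[\mfg,\mfrg]\subset\mfng$, so $\mfw$ acts trivially on the abelian quotient $\mathfrak{t}:=\mfrg/\mfng$. Second, $\mfw$ is compactly embedded (Lemma~\ref{lem.norm}(3)), so its action on $\mfrg$ is completely reducible; writing $\mfrg=\mfrg^{\mfw}\oplus\mfrg_{+}$ for the decomposition into the fixed part and the sum of the nontrivial irreducible submodules, the $\mfw$-equivariant projection $\mfrg\to\mathfrak{t}$ onto the trivial module must annihilate $\mfrg_{+}$. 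Hence $\mfrg_{+}\subset\mfng$ and $\mfrg=\mfrg^{\mfw}+\mfng$, with $\mfrg^{\mfw}=\mfrg\cap C_{\mfg}(\mfw)$. This is exactly where compact embeddedness is essential: without it the nontrivial-isotypic part of $\mfrg$ need not be nilpotent, and noncompact toral directions in the radical would escape $\mfng$. Finally, $\mfh$ and $\mfm$ are $\ad(\mfw)$-invariant (using $[C_{\mfm}(\mfh),\mfm]\subset\mfm$ from Lemma~\ref{lem.norm}(2)), so $C_{\mfg}(\mfw)$ splits as $(C_{\mfg}(\mfw)\cap\mfh)\oplus(C_{\mfg}(\mfw)\cap\mfm)$; since $\mfw\supset\mfh$ we have $C_{\mfg}(\mfw)\cap\mfm\subset C_{\mfm}(\mfh)$. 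Therefore $\mfrg\cap C_{\mfg}(\mfw)\subset\mfh+C_{\mfm}(\mfh)=\mfh+C(\mfg)+C(\mff)+[\mff,\mff]$, which combined with $\mfrg=\mfrg^{\mfw}+\mfng$ gives the required inclusion and hence transitivity of $U\times C(F)$ via~$\rho$.

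For the final assertion I would argue at the Lie algebra level. The Lie algebra of $U$ is $\lg\ltimes\mfng$, whose solvable radical is the nilpotent ideal $\mfng$; the Lie algebra $C(\mff)$ of $C(F)$ is abelian. Hence the solvable radical of the Lie algebra $(\lg\ltimes\mfng)\oplus C(\mff)$ of $U\times C(F)$ is $\mfng\oplus C(\mff)$, which is nilpotent, and the corresponding connected subgroup is $\Nil(G)\times C(F)$; the discrete central kernel $\Delta(D)$ from Proposition~\ref{prop.gf}(1) does not affect this computation.
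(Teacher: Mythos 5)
Your argument is correct, but the route to the key identity is genuinely different from the paper's. Both proofs reduce the transitivity claim to the covering identity $\mfg=\mfh+(\lg+\Nil(\mfg))+C(\mff)$ and then apply Proposition~\ref{prop.gf}(4) with $\mfu=\lg+\Nil(\mfg)$ and $\mfv=C(\mff)$; the paper leaves the computation of the solvable radical essentially to the reader, so your explicit check there is a harmless addition. The difference lies in how the covering identity is proved. The paper decomposes $\mfm$ rather than the radical: full reducibility of $\ad_\mfg(\mfh)$ on $\mfm$ gives $\mfm=C_\mfm(\mfh)+[\mfh,\mfm]$, hence $\mfg=\mfh+C(\mfg)+\mff+[\mfh,\mfm]$, and then the single elementary inclusion $[\mfg,\mfg]<\lg+\Nil(\mfg)$ --- valid for \emph{every} Levi factor --- absorbs both $[\mfh,\mfm]$ and $[\mff,\mff]$ at once (via $\mff=[\mff,\mff]+C(\mff)$ and $C(\mfg)<\Nil(\mfg)$). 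You instead decompose the radical, proving $\Rad(\mfg)=\bigl(\Rad(\mfg)\cap C_\mfg(\mfw)\bigr)+\Nil(\mfg)$ from complete reducibility of the compactly embedded algebra $\mfw$ acting on $\Rad(\mfg)$ together with the triviality of its action on $\Rad(\mfg)/\Nil(\mfg)$; this argument is sound, and the structural fact about the radical it yields is of some independent interest. The cost of your route is the step ``I may choose $\lg$ so that $[\mff,\mff]\subset\lg$'': the independence of $U$ from the choice of Levi factor only licenses passing between Levi factors, while the \emph{existence} of a Levi factor containing the semisimple subalgebra $[\mff,\mff]$ is Malcev's theorem, a nontrivial result that you invoke implicitly and should cite explicitly. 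The paper's observation $[\mff,\mff]<[\mfg,\mfg]<\lg+\Nil(\mfg)$ shows that this appeal to Malcev's theorem is avoidable, which is what makes its proof shorter and more elementary.
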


  \begin{proof}
Since $\ad_{\mfg}(\mfh)$ acts fully reducibly on $\mfm$, we have $\mfm =C_\mfm(\mfh)+[\mfh,\mfm] $.   Thus the hypothesis and Lemma~\ref{lem.norm} imply that
\begin{equation}\label{mfg}\mfg= \mfh +C(\mfg) +\mff + [\mfh, \mfm].\end{equation}
For any Lie algebra $\mfk$, we have $[\mfk,\mfk]< \Lev(\mfk) +\Nil(\mfk)$ where $\Lev(\mfk)$ and $\Nil(\mfk)$ are a semisimple Levi
factor and the  nilradical of $\mfu$, respectively.   Since $\mff$ is a compact Lie algebra, we have $\mff =[\mff,\mff] +C(\mff)< [\mfg,\mfg] +C(\mff)$.
The center $C(\mfg)$ lies in $\Nil(\mfg)$.  Thus Equation~(\ref{mfg}) implies that
   $$\mfg=\mfh +\Lev(\mfg) + \Nil(\mfg) + C(\mff).$$
The corollary now follows from Proposition~\ref{prop.gf}.
  \end{proof}

  We conclude this section by reviewing some structural results.

\begin{lemma}\label{lem.compact} Let $\mfg$ be a Lie algebra and let $\mfl$ be a maximal compactly embedded subalgebra of $\mfg$.
There exists a Levi decomposition $\mfg =\Lev(\mfg) +\Rad(\mfg)$ such that:
\begin{enumerate}

\item $\mathfrak{l} = (\mathfrak{l}\cap \Rad(\mfg)) \oplus (\mathfrak{l}\cap \Lev(\mfg))$;

\item $\mathfrak{l}\cap \Rad(\mfg)$ is abelian and commutes with $\Lev(\mfg)$.   Thus \\
$\Lev(\mfg)+\mfl=\Lev(\mfg)+(\mathfrak{l}\cap \Rad(\mfg))$ is a reductive subalgebra of $\mfg$.

\item $[\mathfrak{l},\mathfrak{l}] \subset \Lev(\mfg)$ and $\mathfrak{l} \cap \Lev(\mfg)$ is a maximal compact subalgebra in $\Lev(\mfg)$.
\end{enumerate}
\end{lemma}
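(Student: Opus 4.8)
The plan is to place the maximal compactly embedded subalgebra $\mfl$ inside a subalgebra of $\mfg$ that is maximal among those reductive in $\mfg$, and to extract the Levi decomposition from the known structure of such a subalgebra. First I would record the standard consequences of compact embeddedness: there is an inner product on $\mfg$ with respect to which every $\ad(X)$, $X\in\mfl$, is skew-symmetric. Hence $\mfl$ acts completely reducibly on $\mfg$, $\mfl$ is a reductive Lie algebra $\mfl=C(\mfl)\oplus[\mfl,\mfl]$ with $[\mfl,\mfl]$ of compact type, and in particular $\mfl$ is reductive in $\mfg$. Since the dimension is bounded, $\mfl$ is contained in a subalgebra $\mfq$ that is maximal with respect to being reductive in $\mfg$.

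The key structural input is the classical description of a maximal reductive subalgebra (due to Mostow): one may write $\mfq=\Lev(\mfg)\oplus\mfz$, where $\Lev(\mfg)$ is a Levi factor of $\mfg$, so that $\mfg=\Lev(\mfg)+\Rad(\mfg)$, and $\mfz$ is an abelian subalgebra of $\Rad(\mfg)$ that centralizes $\Lev(\mfg)$ and consists of $\ad_{\mfg}$-semisimple elements. This is the Levi decomposition I would use, and I expect the bulk of the difficulty to reside in setting it up; the remaining steps are bookkeeping against this splitting. (Concretely one could instead try to build $\Lev(\mfg)$ by averaging a Levi factor containing $[\mfl,\mfl]$ over the compact group $\overline{\Exp(\ad\mfl)}\subset\Aut(\mfg)$, but the toral directions of the center $C(\mfl)$ are awkward to control directly, whereas the maximal-reductive viewpoint handles them automatically.)

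The crux is to show that $\mfl$ is compatible with the decomposition $\mfq=\Lev(\mfg)\oplus\mfz$. Here $\Lev(\mfg)$ and $\mfz$ are ideals of $\mfq$, so the projections $\pi_{\Lev}\colon\mfq\to\Lev(\mfg)$ and $\pi_{\mfz}\colon\mfq\to\mfz$ are Lie algebra homomorphisms. The decisive point is that, because $\mfz$ centralizes $\Lev(\mfg)$, a compactly embedded element splits into compactly embedded pieces: if $W\in\mfl$, then $\ad_{\mfg}(\pi_{\mfz}W)$ vanishes on $\Lev(\mfg)$, so $\ad_{\mfg}(W)$ and $\ad_{\Lev(\mfg)}(\pi_{\Lev}W)$ agree on $\Lev(\mfg)$; as $\ad_{\mfg}(W)$ has purely imaginary eigenvalues, $\pi_{\Lev}W$ is a compact element of the semisimple $\Lev(\mfg)$, hence $\ad_{\mfg}(\pi_{\Lev}W)$ is again skew-symmetric on all of $\mfg$, and then so is $\ad_{\mfg}(\pi_{\mfz}W)=\ad_{\mfg}(W)-\ad_{\mfg}(\pi_{\Lev}W)$. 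Thus $\pi_{\Lev}(\mfl)$ is a compactly embedded subalgebra of $\Lev(\mfg)$, contained in some maximal compact subalgebra $\mfk\subseteq\Lev(\mfg)$, while $\pi_{\mfz}(\mfl)$ lies in the compactly embedded subalgebra $\mfz_{\cop}\subseteq\mfz$ of elements with purely imaginary eigenvalues. Consequently $\mfl\subseteq\mfk\oplus\mfz_{\cop}$, and since $\mfk\oplus\mfz_{\cop}$ is itself compactly embedded in $\mfg$, maximality of $\mfl$ forces $\mfl=\mfk\oplus\mfz_{\cop}$, with $\mfl\cap\Lev(\mfg)=\mfk$ and $\mfl\cap\Rad(\mfg)=\mfz_{\cop}$.

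Finally the three assertions follow. Statement (1) is the direct-sum decomposition just obtained. For (2), $\mfl\cap\Rad(\mfg)=\mfz_{\cop}$ is abelian and commutes with $\Lev(\mfg)$ because $\mfz$ does, so $\Lev(\mfg)+\mfl=\Lev(\mfg)\oplus\mfz_{\cop}$ is reductive. For (3), $\mfz_{\cop}$ is central in $\mfl$, whence $[\mfl,\mfl]=[\mfk,\mfk]\subseteq\mfk\subseteq\Lev(\mfg)$, and $\mfl\cap\Lev(\mfg)=\mfk$ is maximal compact in $\Lev(\mfg)$ --- its maximality there being forced by the global maximality of $\mfl$. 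The single point requiring genuine care, beyond quoting the structure of $\mfq$, is the splitting of compact elements in the crux step, which rests squarely on the fact that $\mfz$ centralizes the Levi factor.
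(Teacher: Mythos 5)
Your proposal is correct in substance, but note that the paper does not prove this lemma at all: it simply cites Lemma 14.3.3 of Hilgert--Neeb, whose proof is developed inside their theory of compactly embedded subalgebras (invariant Levi factors under the compact group $\overline{\Exp(\ad\mfl)}$, etc.). Your route is genuinely different: you reduce everything to Mostow's structure theory of subalgebras that are \emph{reductive in} $\mfg$, embed $\mfl$ in a maximal such subalgebra $\mfq$, quote the description $\mfq=\Lev(\mfg)\oplus\mfz$ with $\mfz\subset\Rad(\mfg)$ abelian, $\ad$-semisimple, and centralizing $\Lev(\mfg)$, and then do the real work in showing that $\mfl$ splits compatibly with this decomposition. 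That splitting argument is sound: the projections $\pi_{\Lev}$ and $\pi_{\mfz}$ are homomorphisms because $\Lev(\mfg)$ and $\mfz$ are commuting ideals of $\mfq$, the image $\pi_{\Lev}(\mfl)$ consists of elliptic elements of $\Lev(\mfg)$ and hence sits in a maximal compactly embedded subalgebra $\mfk$, and maximality of $\mfl$ then forces $\mfl=\mfk\oplus(\mfl\cap\Rad(\mfg))$, from which (1)--(3) follow by bookkeeping exactly as you say. What your approach buys is a proof whose only black box is a single classical theorem (Mostow, \emph{Fully reducible subgroups of algebraic groups}, 1956; the existence of a Levi factor inside a maximal reductive subalgebra can alternatively be derived from Taft-type invariant Levi factor theorems); what the paper's citation buys is a statement already tailored to compactly embedded subalgebras, with no detour through maximal reductive subalgebras. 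Be aware that the quoted Mostow statement is carrying essentially all the depth here, just as the Hilgert--Neeb citation does for the paper.

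One point of your write-up needs repair, though it is fixable in a line. You say that $\ad_{\mfg}(\pi_{\Lev}W)$ is ``skew-symmetric on all of $\mfg$'' and that ``then so is'' $\ad_{\mfg}(\pi_{\mfz}W)=\ad_{\mfg}(W)-\ad_{\mfg}(\pi_{\Lev}W)$. Skew-symmetry is relative to a choice of inner product, and the two operators are a priori skew with respect to \emph{different} inner products; a difference of such operators need not be skew with respect to any inner product. The correct invariant notion is ellipticity: each operator is semisimple with purely imaginary spectrum. The subtraction step is then valid because $\ad_{\mfg}(W)$ and $\ad_{\mfg}(\pi_{\Lev}W)$ \emph{commute} --- indeed $[W,\pi_{\Lev}W]=[\pi_{\mfz}W,\pi_{\Lev}W]=0$ precisely because $\mfz$ centralizes $\Lev(\mfg)$ --- and a difference of commuting semisimple operators with purely imaginary spectra is again semisimple with purely imaginary spectrum. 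So the fact you rightly identify as decisive (that $\mfz$ centralizes the Levi factor) is needed twice: once to make the projections homomorphisms, and once to make the elliptic pieces commute so that ellipticity survives the subtraction.
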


See e.~g. Lemma 14.3.3 in~\cite{HilNeeb}.

\begin{definition}\label{nota.compat} Let $G/H$ be a connected Riemannian homogeneous manifold.
We will say that a semisimple Levi factor $\Lev(\mfg)$ is \emph{compatible} with $\mfh$ if $\lg$ satisfies the conclusions of Lemma~\ref{lem.compact}
relative to some maximal compactly embedded subalgebra $\mfl$ of $\mfg$ containing~$\mfh$.
We will also say that the corresponding Levi factor $\Lev(G)$ of $G$ is \emph{compatible} with $H$ and that $(\Lev(\mfg), \mfh, \mfl)$ is a \emph{compatible triple}.    \end{definition}

\section{The main results}\label{mrss}

As discussed in the Introduction, a Riemannian manifold is said to be a G.O. manifold if every geodesic is the orbit of a one-parameter group of isometries.
In the literature, there is often a distinction made between the language G.O. \emph{manifold} and
G.O. \emph{space}.

\begin{notation}\label{nota.go} A Riemannian homogeneous space expressed in the form $(G/H,g)$, where~$g$ is a left-invariant Riemannian metric on $G/H$,
is said to be a G.O. \emph{space} if every geodesic is the orbit of a one-parameter subgroup of $G$.
\end{notation}

Every G.O. manifold $(M,g)$ has a (not necessarily unique) realization $G/H$ as a G.O. space.   One can choose $G=\Isom(M,g)$ for example.

 We use the notation introduced in the previous section.  In particular, given a homogeneous space expressed in the form $(G/H,g)$ where $g$ is a left-invariant Riemannian metric on $G/H$, we let $\mfm$ denote the orthogonal complement of $\mfh$ in $\mfg$ relative to the Killing form of $\mfg$.
By $[\cdot, \cdot]$ we denote the Lie bracket in $\mathfrak{g}$, and by
$[\cdot, \cdot]_{\mathfrak{m}}$ its $\mathfrak{m}$-component with respect to the decomposition $\mfg=\mfh+\mfm$.  We recall a well-known criterion for $(G/H,g)$
to be a~geodesic orbit space.

\begin{lemma}[\cite{KV}]\label{G.O.-criterion}
$(G/H,g)$ is a geodesic orbit space if and
only if for each $X \in \mathfrak{m}$ there exists $Z \in \mathfrak{h}$ such that
$\la [X+Z,Y]_{\mathfrak{m}},X\ra =0$ for all $Y\in \mathfrak{m}$ where $\ip$ is the inner product on $\mfm$ defined by $g$.
\end{lemma}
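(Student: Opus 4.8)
The plan is to recognize this statement as the Kowalski--Vanhecke geodesic lemma and to prove it by reducing the G.O. condition to a pointwise algebraic identity at the base point $o=eH$. First I would note that, by the left-invariance of $g$, every geodesic of $(G/H,g)$ is determined by its initial point and velocity, and homogeneity lets me assume the initial point is $o$. Under the identification $T_oM\cong\mfm$ of Notation~\ref{autorth}, an arbitrary initial velocity is an element $X\in\mfm$. For $W\in\mfg$, the orbit $\gamma_W(t)=\exp(tW)\cdot o$ is the integral curve through $o$ of the fundamental (Killing) vector field $W^\ast$ induced by $W$, and its initial velocity $W^\ast_o$ corresponds to the $\mfm$-component of $W$. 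Hence a geodesic with initial velocity $X$ can be such an orbit only for $W=X+Z$ with $Z\in\mfh$, and by uniqueness of geodesics with given initial data, $(G/H,g)$ is a G.O. space if and only if for each $X\in\mfm$ there exists $Z\in\mfh$ for which $\gamma_{X+Z}$ is a geodesic.

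The heart of the argument is then to characterize when $\gamma_W$ is a geodesic. Since $\exp(tW)$ is an isometry carrying $o$ to $\gamma_W(t)$ and fixing the field $W^\ast$, the covariant acceleration $(\nabla_{W^\ast}W^\ast)_{\gamma_W(t)}$ is the image of $(\nabla_{W^\ast}W^\ast)_o$ under $\exp(tW)_\ast$; thus $\gamma_W$ is a geodesic if and only if $(\nabla_{W^\ast}W^\ast)_o=0$. Because the fields $Y^\ast$ with $Y\in\mfm$ span $T_oM$, this is equivalent to $\la\nabla_{W^\ast}W^\ast,Y^\ast\ra_o=0$ for all $Y\in\mfm$. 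I would evaluate this pairing using the Koszul formula (equivalently, the Killing identity $\nabla_{W^\ast}W^\ast=-\tfrac12\operatorname{grad}|W^\ast|^2$), differentiating the metric coefficients $\la A^\ast,B^\ast\ra$ along the relevant orbits via the transformation law $\la A^\ast,B^\ast\ra_{\exp(tW)\cdot o}=\la(\Ad(\exp(-tW))A)^\ast,(\Ad(\exp(-tW))B)^\ast\ra_o$. Differentiating at $t=0$ brings down brackets involving $[W,Y]$, and after collecting the terms the pairing equals a nonzero constant multiple of $\la[W,Y]_{\mfm},X\ra$, where $X=W_{\mfm}$.

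Combining the two steps yields the lemma: $\gamma_W$ is a geodesic precisely when $\la[W,Y]_{\mfm},X\ra=0$ for all $Y\in\mfm$, and writing $W=X+Z$ turns this into $\la[X+Z,Y]_{\mfm},X\ra=0$, so the G.O. property becomes the existence of such a $Z$ for every $X$. I expect the main obstacle to be the bookkeeping in the computation of $\la\nabla_{W^\ast}W^\ast,Y^\ast\ra_o$: one must use that $W\mapsto W^\ast$ is a Lie-algebra anti-homomorphism and keep the $\Ad$-derivatives and the Koszul signs consistent. This is purely a matter of care, however, since the conclusion is a vanishing condition and is therefore insensitive both to the overall nonzero scalar and to the sign conventions.
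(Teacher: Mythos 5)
Your proof is correct, and there is nothing in the paper to compare it against: the paper states this lemma with a citation to Kowalski--Vanhecke \cite{KV} and gives no proof, and your argument is precisely the standard proof of that ``geodesic lemma'' --- reduce by homogeneity to orbits $\exp(tW)\cdot o$ with $W=X+Z$, note that such an orbit is a geodesic iff its acceleration vanishes at $o$ (by invariance of $W^{*}$ under its own flow of isometries), and compute $\la \nabla_{W^{*}}W^{*}, Y^{*}\ra_{o} = -\la [W,Y]_{\mfm}, W_{\mfm}\ra$ from the Killing-field identity and the transformation law for $\Ad$. The one point you gloss over --- that a geodesic might a priori coincide with an orbit only after a constant-speed reparametrization, so that $W_{\mfm}=cX$ rather than $X$ --- is harmless, since replacing $W$ by $W/c$ (equivalently $Z$ by $Z/c$) reduces to your normalization without affecting the vanishing condition.
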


 \begin{lemma}\label{lem.skew}  Suppose  that $(G/H,g)$ is a geodesic orbit space.  Then for all $Y\in C_\mfm(\mfh)$, the endomorphism $\ad_\mfg(Y)$ leaves $\mfm$ invariant and $\ad_\mfg(Y)|_{\mfm}$ is skew-symmetric .
 \end{lemma}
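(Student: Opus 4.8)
The plan is to separate the two assertions. The invariance of $\mfm$ under $\ad_{\mfg}(Y)$ for $Y\in C_{\mfm}(\mfh)$ is already available: it is exactly the first statement of Lemma~\ref{lem.norm}(2), namely $[C_{\mfm}(\mfh),\mfm]\subset\mfm$, so I would simply cite it. The real content is the skew-symmetry of $\ad_{\mfg}(Y)|_{\mfm}$ with respect to $\ip$, and for this I would exploit the geodesic orbit criterion of Lemma~\ref{G.O.-criterion}.

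First I would reduce skew-symmetry to a scalar identity: an endomorphism $A$ of the real inner product space $(\mfm,\ip)$ is skew-symmetric if and only if $\langle AX,X\rangle=0$ for every $X\in\mfm$ (by polarization). Thus it suffices to prove that $\langle [Y,X],X\rangle=0$ for all $X\in\mfm$ and all $Y\in C_{\mfm}(\mfh)$. Note that $[Y,X]\in\mfm$ by the invariance just cited, so on this term the $\mfm$-projection is superfluous, i.e. $[Y,X]_{\mfm}=[Y,X]$.

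The key step is to feed a fixed $X\in\mfm$ into Lemma~\ref{G.O.-criterion}: there exists $Z\in\mfh$ (depending on $X$) such that $\langle [X+Z,Y']_{\mfm},X\rangle=0$ for all $Y'\in\mfm$. I would evaluate this on $Y'=Y\in C_{\mfm}(\mfh)$ and split the bracket as $[X+Z,Y]=[X,Y]+[Z,Y]$. Because $Y$ centralizes $\mfh$ and $Z\in\mfh$, the term $[Z,Y]$ vanishes identically, so the criterion collapses to $\langle [X,Y]_{\mfm},X\rangle=0$. Since $[X,Y]=-[Y,X]\in\mfm$, this reads $\langle [Y,X],X\rangle=0$, which is exactly the required identity. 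As $X\in\mfm$ was arbitrary, $\ad_{\mfg}(Y)|_{\mfm}$ is skew-symmetric.

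I do not anticipate a genuine obstacle. The one point to watch is that the element $Z$ produced by Lemma~\ref{G.O.-criterion} depends on $X$; this a priori $X$-dependence is precisely what makes the criterion awkward in general arguments. Here it is harmless, however, because $Y$ commutes with all of $\mfh$, so the $[Z,Y]$ contribution drops out no matter which $Z$ is selected. Recognizing that this dependence is irrelevant in the present situation is the crux that makes the proof short.
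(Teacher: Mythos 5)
Your proposal is correct and is essentially identical to the paper's own argument: the paper disposes of the lemma in one line by invoking Lemma~\ref{G.O.-criterion} and observing that $[Z,Y]=0$ since $Y\in C_{\mfm}(\mfh)$, with the invariance $[C_{\mfm}(\mfh),\mfm]\subset\mfm$ coming from Lemma~\ref{lem.norm}(2), exactly as you do. Your write-up merely makes explicit the polarization step and the irrelevance of the $X$-dependence of $Z$, which the paper leaves implicit.
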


  Lemma~\ref{lem.skew} is immediate from Lemma~\ref{G.O.-criterion} since $[Z,Y]=0$ in the notation of the lemmas.

  The following well-known lemma is also an elementary consequence of Lemma~\ref{G.O.-criterion}.

\begin{lemma}\label{lem.subman} Let $(G/H,g)$ be a Riemannian G.O. space, and let $Q$ be a connected Lie subgroup of $G$ normalized by $H$.   Then the submanifold $Q/(H\cap Q) \,(=QH/H)$ is totally geodesic and is itself a G.O. manifold.

\end{lemma}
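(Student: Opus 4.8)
The plan is to realize the submanifold $S:=QH/H=Q/(H\cap Q)$ as a homogeneous space in its own right, equip it with a reductive decomposition inherited from the ambient one, and then transfer the G.O. criterion (Lemma~\ref{G.O.-criterion}) from $(G/H,g)$ to $S$ essentially verbatim. Since $Q$ is normalized by $H$, its Lie algebra $\mfq$ is $\Ad_G(H)$-invariant and $\wt\mfg:=\mfh+\mfq$ is the Lie algebra of the subgroup $QH$. The first step I would take is to choose the reductive complement cleverly: set $\wt\mfm:=\pr_\mfm(\mfq)$, the image of $\mfq$ under the projection $\pr_\mfm:\mfg\to\mfm$ along $\mfh$. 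I would then check the elementary facts that $\wt\mfm$ is $\Ad_G(H)$-invariant (since $\pr_\mfm$ commutes with $\Ad_G(H)$), that $\wt\mfm\subset\mfm\cap\wt\mfg$, and that $\wt\mfg=\mfh\oplus\wt\mfm$ is a reductive decomposition; here one uses $\mfq\subset\mfh+\wt\mfm$ together with $\wt\mfm\cap\mfh\subset\mfm\cap\mfh=0$. Under the identification $T_oS\cong\wt\mfm\subset\mfm\cong T_o(G/H)$, the induced metric $g|_S$ corresponds to the restriction of $\ip$ to $\wt\mfm$.

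The hard part — the step carrying the real content — will be to show that bracket components computed in the sub-decomposition agree with those computed in the ambient one. Concretely, for $X,Y\in\wt\mfm$ and $Z\in\mfh$ one has $X+Z\in\wt\mfg$, so $[X+Z,Y]$ lies in the subalgebra $\wt\mfg=\mfh\oplus\wt\mfm$; writing $[X+Z,Y]=A+N$ with $A\in\mfh$ and $N\in\wt\mfm\subset\mfm$, the ambient $\mfm$-component and the sub-$\wt\mfm$-component both equal $N$, so $[X+Z,Y]_{\wt\mfm}=[X+Z,Y]_{\mfm}$. This identity is exactly what makes the transfer work, and arranging $\wt\mfm$ to sit inside $\mfm$ is precisely what guarantees it.

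With this in hand I would run the G.O. criterion. Given $X\in\wt\mfm\subset\mfm$, the G.O. property of $(G/H,g)$ furnishes $Z\in\mfh$ with $\la[X+Z,Y]_\mfm,X\ra=0$ for all $Y\in\mfm$, in particular for all $Y\in\wt\mfm$. By the previous paragraph this reads $\la[X+Z,Y]_{\wt\mfm},X\ra=0$ for all $Y\in\wt\mfm$, which is precisely the G.O. criterion for $(QH/H,g|_S)$ relative to $\wt\mfg=\mfh\oplus\wt\mfm$. Hence $S$ is a G.O. space; it is homogeneous under $QH$ and so complete, i.e. a G.O. manifold.

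Finally, total geodesy would follow from the same geodesic vector $\xi:=X+Z\in\wt\mfg$. Since $\xi$ satisfies the G.O. condition in the ambient space, $t\mapsto\exp(t\xi)\cdot o$ is the geodesic of $(G/H,g)$ with initial velocity $X$; since it also satisfies the G.O. condition in $S$, the same curve is the geodesic of $(S,g|_S)$ with initial velocity $X$. As $\xi\in\wt\mfg=\Lie(QH)$, we have $\exp(t\xi)\in QH$, so this curve stays in $QH\cdot o=S$. Thus for each direction $X\in T_oS$ the ambient geodesic coincides with the intrinsic one; by homogeneity of $S$ under $QH$ the same holds at every point, which is total geodesy. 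The only routine items left to verify are the identification $T_oS\cong\wt\mfm$ and that $\wt\mfg$ is indeed the Lie algebra of $QH$, both immediate from $H$ normalizing $Q$.
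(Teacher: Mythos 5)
Your proof is correct and takes exactly the route the paper intends: the paper omits a proof, remarking only that the lemma "is an elementary consequence of Lemma~\ref{G.O.-criterion}", and your argument---passing to the subalgebra $\mfh+\mfq$ with the reductive complement $\wt\mfm=\pr_{\mfm}(\mfq)\subset\mfm$, noting that $\mfm$-components and $\wt\mfm$-components of brackets agree, and reusing the same geodesic vectors $X+Z$ both to verify the criterion on $QH/H$ and to identify intrinsic with ambient geodesics for total geodesy---is precisely that elementary derivation, carried out correctly. The only point worth noting is that your complement $\wt\mfm$ need not be the Killing-orthogonal complement of $\mfh$ in $\mfh+\mfq$, so you are implicitly using the Kowalski--Vanhecke geodesic lemma in its general form, valid for any $\Ad(H)$-invariant reductive decomposition, which is indeed the generality in which \cite{KV} establishes it.
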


  \begin{remark} Let $M$ be a homogeneous manifold.   When expressing $M$ in the form $G/H$, we typically assume that $G$ acts effectively on $G/H$, i.e., that $H$ contains no non-trivial normal subgroups of $G$.   There are occasions, however, when it is convenient to relax that assumption.   In Lemma~\ref{lem.subman}, the action of $QH$ on $QH/H$ need not be effective.  $QH/H$ is a realization of the indicated submanifold as a (possibly non-effective) G.O. space. Of course, one can always obtain an effective realization as a G.O. space by dividing out by the effective kernel.
     \end{remark}

     \subsection{Structural resuls.}

 \begin{theorem}\label{thm.nil}  Let $(M,g)$ be a connected Riemannian geodesic orbit manifold and let $G/H$ be a realization of $M$
as a geodesic orbit space.  Let $\Lev(G)$ be a semisimple Levi factor of $G$, let $\Nil(G)$ be the nilradical of $G$, and define the subgroup $F$ of the normalizer of $G$ in $\Isom(M,g)$ and the action $\rho$ of $G\times F$ on $M$ as in Proposition~\ref{prop.gf}.   Let $R:=\Lev(G)\Nil(G)\times C(F)<G\times F$ where $C(F)$ is the center of $F$.  Then $R$ acts transitively and isometrically on $M$ via $\rho$.   The radical $\Nil(G)\times C(F)$ of $R$ is nilpotent of step size at most two.
 \end{theorem}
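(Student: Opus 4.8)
The plan is to read off the transitivity and isometry assertions directly from Corollary~\ref{cor.nilrad}, and then to reduce the step-size claim to a statement about the nilradical $\Nil(\mfg)$ alone, which I will settle by recognizing a totally geodesic orbit as a G.O.\ nilmanifold and applying the step-two theorem of~\cite{Gor96}. First I would observe that the hypothesis of Corollary~\ref{cor.nilrad} holds automatically: since $G/H$ is a geodesic orbit space, Lemma~\ref{lem.skew} gives that $\ad_\mfg(Y)|_\mfm$ is skew-symmetric for every $Y\in C_\mfm(\mfh)$, which is precisely that hypothesis. Applying the corollary with $U=\Lev(G)\Nil(G)$ then yields at once that $R=U\times C(F)$ acts transitively and isometrically via $\rho$ on $M$ and that its solvable radical is the nilpotent group $\Nil(G)\times C(F)$. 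This settles everything except the bound on the step size.

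Since $C(F)$ is abelian and the radical is the direct product $\Nil(G)\times C(F)$, its step size equals $\max\{c,1\}$, where $c$ is the step size of $\Nil(\mfg)$; so it suffices to show $c\le 2$. The crucial preliminary point is that $\Nil(\mfg)\cap\mfh=0$: for $X\in\Nil(\mfg)\cap\mfh$ the endomorphism $\ad_\mfg(X)$ is nilpotent (as $X$ lies in the nilradical) and semisimple (as $\mfh$ is compactly embedded, the isotropy group $H$ being compact, so that $\ad_\mfg(\mfh)$ is contained in the Lie algebra of the compact group $\Ad_\mfg(H)$), hence $\ad_\mfg(X)=0$; thus $X\in C(\mfg)\subseteq\mfm$ by Lemma~\ref{lem.norm}(4), and $\mfh\cap\mfm=0$ forces $X=0$. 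Consequently the orbit $\Nil(G)\cdot o=\Nil(G)/(H\cap\Nil(G))$ has the same dimension as $\Nil(G)$, its isotropy $H\cap\Nil(G)$ is discrete, and the Lie algebra acting effectively on it is all of $\Nil(\mfg)$.

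Finally I would identify this orbit as a G.O.\ nilmanifold. By Lemma~\ref{lem.subman} the orbit $\Nil(G)\cdot o$ is totally geodesic in $M$ and is itself a G.O.\ manifold; since the isotropy is discrete, it is covered by the simply-connected nilpotent Lie group $\widetilde{\Nil(G)}$ carrying the pulled-back metric, which is left-invariant, so this cover is a Riemannian nilmanifold with Lie algebra $\Nil(\mfg)$. Because the geodesic orbit property passes to Riemannian covers --- a geodesic upstairs projects to a geodesic that is an orbit of some one-parameter isometry group, whose generating Killing field lifts and whose flow then carries the lifted geodesic, by uniqueness of lifts --- this nilmanifold is a G.O.\ nilmanifold, and the theorem of~\cite{Gor96} quoted in the Introduction gives $c\le 2$, as required.

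I expect the genuine obstacle to lie exactly in this last transfer: making precise that the abstract G.O.\ manifold structure on the totally geodesic orbit upgrades to a G.O.\ nilmanifold whose underlying Lie algebra is \emph{exactly} $\Nil(\mfg)$, so that Gordon's bound applies and returns to $\Nil(G)$. The delicate ingredients are the dimension count secured by $\Nil(\mfg)\cap\mfh=0$ (which pins the acting Lie algebra to be $\Nil(\mfg)$ rather than a proper quotient) and the inheritance of the geodesic orbit property under passage to the simply-connected cover.
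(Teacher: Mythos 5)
Your proposal is correct and takes essentially the same route as the paper: Lemma~\ref{lem.skew} verifies the hypothesis of Corollary~\ref{cor.nilrad}, which gives the transitive isometric action of $R$, and Lemma~\ref{lem.subman} combined with Gordon's step-two theorem from \cite{Gor96} handles the nilradical, with $C(F)$ abelian finishing the argument. The additional care you take in the second half --- proving $\Nil(\mfg)\cap\mfh=0$ so that the isotropy of $\Nil(G)$ on its orbit is discrete, and lifting the G.O.\ property to the simply-connected cover so that Gordon's theorem genuinely applies to a nilmanifold with Lie algebra exactly $\Nil(\mfg)$ --- fills in a step that the paper's one-line citation of Lemma~\ref{lem.subman} leaves implicit, and is correct.
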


 \begin{proof} The fact that $R$ acts transitively and isometrically on $(M,g)$ is immediate from Corollary~\ref{cor.nilrad} and Lemma~\ref{lem.skew}.
It remains to be shown that the nilradical has step size at most two.
By Lemma~\ref{lem.subman}, $\Nil(G)$ is a G.O. nilmanifold.   By Theorem 2.2 of \cite{Gor96}, every G.O. nilmanifold is at most 2-step nilpotent.   Since $C(F)$ is abelian, it follows that $\Nil(G)\times C(F)$ is at most 2-step nilpotent.
 \end{proof}

\begin{remark}\label{rems} Theorem 1.15(i) in \cite{Gor96} claims, under the same hypotheses as Theorem~\ref{thm.nil}, that $\Lev(G)\Nil(G)$ acts transitively on the geodesic orbit manifold $M$.
As noted in the Introduction, this assertion is not true in full generality
(see e.~g. Examples 4 and 5 in~\cite{Nik2016}). On the other hand it is valid
for naturally reductive metrics, see Theorem 3.1 in~\cite{Gor85}.  Theorem~\ref{thm.nil} provides a correction to the version in~\cite{Gor96} for arbitrary G.O. manifolds.

%(ii) By Theorems 1.14 and 2.2 of \cite{Gor96}, both $\Lev(G)/(\Lev(G)\cap H)$ and $\Nil(G)$ with the metrics induced by $G$ are G.O. manifolds, and $\Nil(G)$ is at most two-step nilpotent.   Moreover, Theorem 1.15(ii) in the same article asserts that the noncompact part of $\Lev(G)$ commutes with $\Nil(G)$.
\end{remark}

%(ii) Given a homogeneous Riemannian manifold $G/H$, one may choose a semisimple Levi factor $\Lev(G)$ of $G$ normalized by $H$ such that $\Lev(G)H$ is a reductive subgroup of $G$.  In particular, $GH\cap \Rad(G)$ is abelian and commutes with $G$.  We will say that $\Lev(G)$ is \emph{compatible}with $H$.
%Let $\Lev{G}$ be a semisimple Levi factor compatible with $H$, and let $\Lev(G)=\Lev(G)_{\nc}\Lev(G)_c$ be the decomposition of $\Lev(G)$ into semisimple subgroups of noncompact and compact type, respectively.   (The intersection $\Lev(G)_{\nc}\cap \Lev{G}_c$ is discrete.)  Since the homomorphic projection $\pi:\mfg\to\Lev(\mfg)$ maps $\mfh$ into a compactly embedded subalgebra of $\Lev(G)$, we have $\pi(\mfh)<\mfk + \Lev(\mfg)_c$ where $\mfk$ is a maximal compactly embedded subalgebra of $\Lev(\mfg)_{\nc}$.  Let $K$ be the corresponding connected subgroup of $\Lev(G)_{\nc}$.

\begin{prop}\label{gnc}
Let $(G/H, g)$ be a connected Riemannian G.O. space and let $\Lev(G)$ be any Levi factor of $G$.  Then the noncompact part $\Lev(G)_{\nc}$ of $\Lev(G)$ is a normal subgroup of $G$, i.e. $\Lev(G)_{\nc}$ commutes with $\Rad(G)$.

\end{prop}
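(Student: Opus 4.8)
The plan is to descend to the Lie algebra and prove the equivalent statement $[\Lev(\mfg)_{\nc},\Rad(\mfg)]=0$; at the group level this is precisely the normality of $\Lev(G)_{\nc}$ in $G$. Write $\mfs=\Lev(\mfg)_{\nc}$ and $\mfr=\Rad(\mfg)$. Since $\mfs$ automatically commutes with the compact Levi factors, the content is that the representation $\ad|_{\mfr}\colon\mfs\to\End(\mfr)$ is trivial. The guiding principle is that $\mfs$ is semisimple with every simple ideal noncompact, so it admits no nonzero homomorphism into a compact Lie algebra; hence it suffices to show that $\ad(\mfs)|_{\mfr}$ acts by $\so$-skew operators for a suitable inner product. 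I fix a compatible triple $(\Lev(\mfg),\mfh,\mfl)$ as in Definition~\ref{nota.compat}, with Cartan decomposition $\Lev(\mfg)=\mfk_0\oplus\mfp_0$, $\mfk_0=\mfl\cap\Lev(\mfg)$, and set $\mfp_{\mfs}=\mfs\cap\mfp_0$. As $\mfk_{\mfs}:=\mfs\cap\mfk_0=[\mfp_{\mfs},\mfp_{\mfs}]$, the algebra $\mfs$ is generated by $\mfp_{\mfs}$; and since $\mfr\cap\mfh\subseteq\mfl\cap\Rad(\mfg)$ is central in $\Lev(\mfg)$ by Lemma~\ref{lem.compact} and hence harmless, everything reduces to proving $\ad(X)|_{\mfr\cap\mfm}=0$ for each $X\in\mfp_{\mfs}$.

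Two preliminary facts drive the argument. First, $\mfp_0\subseteq\mfm$: since $\mfh\subseteq\mfk_0\oplus(\mfl\cap\Rad(\mfg))$, one checks $B_{\mfg}(\mfp_0,\mfh)=0$, using that the trace form of the $\Lev(\mfg)$-representation on $\Rad(\mfg)$ is an invariant bilinear form (so it orthogonalizes $\mfp_0$ against $\mfk_0$) and that pairing $\mfp_0\subseteq\Lev(\mfg)$ against the centralizing abelian piece $\mfl\cap\Rad(\mfg)$ is an $\ad$-invariant functional on the perfect algebra $\Lev(\mfg)$, hence zero. Second, each $X\in\mfp_{\mfs}$ is real-semisimple: lying in a maximal $\R$-split abelian subspace, $\ad_{\mfg}(X)$ is diagonalizable over $\R$ on all of $\mfg$, so its restriction to any $\ad(X)$-invariant subspace has real spectrum. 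The whole strategy is the clash between this real spectrum and the skew-symmetry forced by the geodesic orbit property: a real-semisimple operator that is also skew-symmetric for some inner product must vanish.

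To produce the skew-symmetry I would use Lemma~\ref{G.O.-criterion} at two kinds of velocities. Applied to $X\in\mfp_{\mfs}\subseteq\mfm$ it yields $Z_X\in\mfh$ with $\la[X+Z_X,Y]_{\mfm},X\ra=0$ for all $Y\in\mfm$; testing against $Y\in\mfp_{\mfs}$ and invoking $\Ad(H)$-invariance of $\ip$, which makes the $\mfk_{\mfs}$-valued bracket $[\mfp_{\mfs},\mfp_{\mfs}]$ drop out against $\mfp_{\mfs}$, forces $[Z_X,X]=0$: the isotropy vector of a noncompact Cartan direction commutes with it. Applied instead to a radical velocity $V\in\mfr\cap\mfm$, with isotropy vector $Z_V$, testing against $Y=X$ and pairing with $V$ gives the key identity $\la(\ad(X)V)_{\mfm},V\ra=\la[Z_V,X],V\ra$, whose right-hand side is a pairing between $\mfp_{\mfs}$ and $\mfr\cap\mfm$. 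Thus the symmetric part of the action of $X$ on $\mfr\cap\mfm$ is governed entirely by this cross term.

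The main obstacle, and the technical heart of the proof, is to show that $\ad(\mfp_{\mfs})$ preserves $\mfr\cap\mfm$ and that $\mfp_{\mfs}$ is $\ip$-orthogonal to $\mfr\cap\mfm$ — equivalently, that $\Ad(\Lev(G)_{\nc})$ preserves the complement of $\mfh$ in $\Rad(\mfg)$ and acts there with relatively compact image. I expect to extract both by polarizing the geodesic orbit identity over the velocities $X$, $V$, and $X+V$ and exploiting the $\ad(\mfk_{\mfs})$-weight decomposition together with $\Ad(H)$-invariance of $\ip$ (so that only matching weight spaces can pair), reducing to the rank-one factors where the split directions and the radical are forced apart directly; the delicate case is $\mfk_{\mfs}\subseteq\mfh$, where the naive centralizer argument through Lemma~\ref{lem.skew} is unavailable. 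Granting this, the key identity gives $\la(\ad(X)V)_{\mfm},V\ra=0$ for all $V\in\mfr\cap\mfm$, so $\ad(X)|_{\mfr\cap\mfm}$ is skew-symmetric; having real spectrum it vanishes. Hence $[\mfp_{\mfs},\mfr\cap\mfm]=0$, which (with the central piece $\mfr\cap\mfh$) upgrades to $[\mfp_{\mfs},\mfr]=0$; since $\ker(\ad|_{\mfr})$ is an ideal of $\mfs$ containing $\mfp_{\mfs}$ it equals $\mfs$, so $[\mfs,\Rad(\mfg)]=0$ and $\Lev(G)_{\nc}$ is normal in $G$.
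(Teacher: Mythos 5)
Your overall frame (reduce to $[\Lev(\mfg)_{\nc},\Rad(\mfg)]=0$, then play real spectrum against metric skew-symmetry) is sound, and your closing steps are fine; but the proof has a genuine, and in fact self-acknowledged, gap at its core. Everything hinges on killing the cross term $\la [Z_V,X],V\ra$ in your key identity, i.e.\ on showing that $\ad(\mfp_{\mfs})$ preserves $\mfr\cap\mfm$ and that $\mfp_{\mfs}\perp\mfr\cap\mfm$. You do not prove this; you write ``I expect to extract both by polarizing\dots'' and then ``Granting this\dots''. That step is not a technicality one can defer: $Z_V\in\mfh$ is an uncontrolled isotropy element, $\mfh$ need not lie in $\Lev(\mfg)$, so $[\mfh,\mfp_{\mfs}]$ has no reason to stay inside $\Lev(\mfg)_{\nc}$ or to be $\ip$-orthogonal to $\mfr\cap\mfm$; controlling exactly this interaction between $\mfh$, the Levi factor, and the radical \emph{is} the theorem. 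There is also a secondary unjustified step: your final ``upgrade'' tacitly uses $\mfr=(\mfr\cap\mfh)+(\mfr\cap\mfm)$, but an ideal need not split along the decomposition $\mfg=\mfh\oplus\mfm$ (only $\Nil(\mfg)\subset\mfm$ is automatic, via $B_\mfg(\Nil(\mfg),\mfg)=0$), so even granting your main claim, $[\mfp_{\mfs},\mfr]=0$ would not yet follow.

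It is instructive to see how the paper sidesteps precisely this obstacle: it never applies the G.O.\ criterion with Levi directions as velocities. Instead it takes velocities $X\in\Nil(\mfg)$ and test vectors $Y\in\mfp$, where $\mfp$ is the $\ip$-orthocomplement of $\Nil(\mfg)$ in $\mfm$. Because $\mfp$ is $\ad(\mfh)$-invariant, the troublesome cross term $\la [Z,Y],X\ra$ vanishes \emph{automatically} ($[Z,Y]\in\mfp\perp\Nil(\mfg)$), yielding $\la [Y,X],X\ra=0$ and hence that every $Y\in\mfp$ acts skew-symmetrically on $\Nil(\mfg)$. Thus the subalgebra $\mfo=\{Y:\ad(Y)|_{\Nil(\mfg)}\ \mbox{skew}\}$ satisfies $\mfg=\mfo+\Nil(\mfg)$, and Levi's theorem applied to $\mfo$ produces \emph{some} Levi factor $\wt{\Lev}(\mfg)\subset\mfo$; its noncompact part, acting skew-symmetrically (hence through a compact algebra), must annihilate $\Nil(\mfg)$, and then $[\mfg,\Rad(\mfg)]\subset\Nil(\mfg)$ plus complete reducibility gives $[\wt{\Lev}(\mfg)_{\nc},\Rad(\mfg)]=0$; conjugacy of Levi factors under $\exp(\ad\Nil(\mfg))$ transfers the conclusion to an arbitrary $\Lev(\mfg)$. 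Note the two structural moves that make this work and that your approach lacks: (i) the skew-symmetry is obtained for \emph{all} of $\mfp$, not just for Levi directions, so no a priori compatibility between the Levi factor and the metric decomposition is needed; and (ii) the arbitrariness of the Levi factor is handled at the very end by conjugation, rather than built in from the start via a compatible triple. If you want to salvage your draft, replacing your ``technical heart'' by this $\mfo$-and-Levi's-theorem mechanism is the natural repair.
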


This Proposition is asserted in \cite{Gor96}, although the proof is not included there.   The proof in the naturally reductive case is given in \cite{Gor85}.
For completeness, we include the proof of the proposition here.

\begin{proof}
Write $\mfg=\mfh +\mfm$ as above and note that $\Nil(\mfg)<\mfm$.  Let $\mfo$ be the subalgebra of $\mfg$ defined by
\begin{equation}\label{eq.alg.o}
\mathfrak{o}=\{Y \in \mfg\,|\, \ad(Y)|_{\Nil(\mfg)}\mbox{\,\,is\,skew-symmetric} \}.
\end{equation}
(Here skew-symmetry is with respect to the inner product $\ip$ on $\mfm$.)
We first show that
\begin{equation}\label{eq.p}\mfg=\mfo+\Nil(\mfg).\end{equation}
   Let $\mfp$ be the orthogonal complement to $\Nil(\mfg)$ in $\mfm$ and let $Y\in\mfp$.     Given $X\in\Nil(\mfg)$, let $Z\in\mfh$ satisfy the conclusion of Lemma~\ref{G.O.-criterion}.   We have
      $$\la [Y, X],\,X\ra=\la [Y,X+Z],\,X\ra =0,$$
      where the first equation follows from the fact that $[\mfh,\mfp]<\mfp$ and the second equation is Lemma~\ref{G.O.-criterion}.
Thus $\mfp<\mfo$.   Since trivially $\mfh<\mfo$, Equation~(\ref{eq.p}) follows.

We next show that $\mfo$ contains a semisimple Levi factor of $\mfg$.  Let $\pi: \mfg \rightarrow \mathfrak{s}:=\mfg/ \Rad(\mfg)$ be the homomorphic projection.
By Equation~(\ref{eq.p}) and the fact that $\Nil(\mfg)<\Rad(\mfg)$, we have $\pi(\mfo)=\mfs$.
Since the Lie algebra $\mathfrak{s}$ is semisimple, Levi's Theorem (see e.g. Theorem~5.6.6 in \cite{HilNeeb}),
yields a homomorphism $\varphi: \mathfrak{s} \rightarrow \mathfrak{o}$ such that $\pi\circ \varphi =\operatorname{Id}_{\mathfrak{s}}$.
The  image of~$\varphi$ is the desired Levi factor $\wt{\Lev}(\mfg)$.

Let $\wt{\Lev}(\mfg)_{\nc}$ be the noncompact part of $\wt{\Lev}(\mfg)$.   The map $Y\mapsto\ad(Y)|_{\Rad(\mfg)}$ is a~representation of $\wt{\Lev}(\mfg)_{\nc}$ which acts by semisimple endomorphisms on the invariant subspace $\Nil(\mfg)$.
   Since the only representation of a semisimple Lie algebra of noncompact type by skew-symmetric endomorphisms is the trivial representation, we conclude that $[\wt{\Lev}(\mfg)_{\nc},\Nil(\mfg)]=0$.
 Finally since $[\wt{\Lev}(\mfg)_{\nc},\Rad(\mfg)]<\Nil(\mfg)$ and
since every representation of a semisimple Lie algebra is completely reducible, we have $[\wt{\Lev}(\mfg)_{\nc},\Rad(\mfg)]=0$.   In particular, $\wt{\Lev}(\mfg)_{\nc}$ is normal in $G$.

Finally, any semisimple Levi factor $\Lev(\mfg)$ is conjugate to $\wt{\Lev}(\mfg)$ via an element of $\Nil(\mfg)$.   Hence $\lgnc=\wt{\Lev}(\mfg)_{\nc}$ by normality of $\wt{\Lev}(\mfg)_{\nc}$ in $G$.
 \end{proof}
\smallskip

By Lemma~\ref{lem.subman}, if $G/H$ is a G.O. space, then $\Nil(G)$ is a G.O. nilmanifold.   We next discuss some properties of G.O. nilmanifolds.

\begin{lemma}[\cite{Wil}]\label{lem.wilson} Let $(N,g)$ be a Riemannian nilmanifold and $\ip$ the associated inner product on the Lie algebra $\mfn$.   Then
$\Isom(N,g)=N\rtimes H$, where $H=\Aut_{\orth}(N,g)$. {\rm (}See Notation~\ref{autorth}.{\rm )}
Thus the full isometry algebra of $(N,g)$ is the semi-direct sum $\mfn+\mfh$, where~$\mfh$ is the space of skew-symmetric derivations of $(\mfn,\ip)$.

\end{lemma}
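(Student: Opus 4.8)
The plan is to realize $N$ inside its own isometry group and then pin down the isotropy. Writing $I:=\Isom(N,g)$, the left translations embed $N$ as a subgroup acting simply transitively by isometries (the metric being left-invariant), so $I=NH$, where $H$ is the isotropy subgroup at the identity $e$; moreover $N\cap H=\{e\}$, and $H$ is compact since the isometry group of a Riemannian manifold acts properly. The isotropy representation of $H$ on $T_eN\cong\mfn$ is faithful and, as $H$ fixes $e$ and preserves $g$, lands in $O(\mfn,\ip)$. On the Lie algebra level $\mathfrak{i}=\mfn+\mfh$, where $\mfn$ is realized by the Killing fields generating the left translations and $\mfh$ by the Killing fields vanishing at $e$; the assignment $Z\mapsto -(\nabla Z)_e$ embeds $\mfh$ into the skew-symmetric endomorphisms of $(\mfn,\ip)$, injectivity holding because a Killing field is determined by its $1$-jet at a point.

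First I would dispatch the easy inclusion $\Aut_{\orth}(N,g)\subseteq H$. Given a skew-symmetric derivation $D$ of $(\mfn,\ip)$, the one-parameter group $\exp(tD)$ consists of automorphisms of $\mfn$, and hence (using that $N$ is simply connected) integrates to a one-parameter group of automorphisms of $N$ that is orthogonal at $e$. By Lemma~\ref{lem.autorth}, applied with trivial isotropy, each such automorphism is an isometry fixing $e$, so it lies in $H$. Thus every skew-symmetric derivation belongs to $\mfh$, and $\Aut_{\orth}(N,g)\subseteq H$.

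The heart of the proof, and its main obstacle, is the reverse inclusion $H\subseteq\Aut_{\orth}(N,g)$: every isometry fixing $e$ must be an automorphism. I would reduce this to showing that $\mfn$ is an ideal of $\mathfrak{i}$, equivalently that $\mfh$ acts on $\mfn$ by derivations. Granting this, $N$ is normal in $I$ and the argument closes quickly: for $\varphi\in H$ the conjugation $n\mapsto\varphi n\varphi^{-1}$ is an automorphism of $N$ which, since $\varphi(e)=e$, coincides with $\varphi$ as a self-map of $N$; hence $\varphi\in\Aut(N)\cap I=\Aut_{\orth}(N,g)$. Once $\mfn$ is known to be a nilpotent ideal it is forced to be the nilradical $\Nil(\mathfrak{i})$, since $\mfh\cap\Nil(\mathfrak{i})=0$: a nonzero element of $\mfh$ acts on $\mfn$ by a nonzero skew-symmetric, hence semisimple, operator, while nilradical elements act nilpotently. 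Proving that each $A_Z=-(\nabla Z)_e$, $Z\in\mfh$, is a derivation of $\mfn$ is the genuine difficulty; I would derive it by combining the Killing equation for $Z$ with the explicit Levi--Civita connection formula of a left-invariant metric. This is exactly the step where nilpotency is indispensable: for a general left-invariant metric the isotropy can fail to act by derivations (as in the solvable model of real hyperbolic space), so nilpotency must be invoked here to control the connection terms.

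Finally I would assemble the conclusion. The two inclusions give $H=\Aut_{\orth}(N,g)$, and since $N$ is normal, $N\cap H=\{e\}$, and $I=NH$, the isometry group is the semidirect product $N\rtimes H$. On the Lie algebra level $\mathfrak{i}=\mfn+\mfh$ with $\mfh$ consisting precisely of the skew-symmetric derivations of $(\mfn,\ip)$, as asserted.
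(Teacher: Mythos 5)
Your framework is sound as far as it goes: the simply transitive isometric action of $N$ by left translations, the decomposition $\Isom(N,g)=NH$ with $N\cap H=\{e\}$ and $H$ compact, the faithful orthogonal isotropy representation, the easy inclusion $\Aut_{\orth}(N,g)\subseteq H$ via Lemma~\ref{lem.autorth}, and the formal deduction that \emph{once} $N$ is known to be normal in $\Isom(N,g)$, every $\varphi\in H$ is an automorphism and the semidirect product structure follows. All of this is correct, but it is also routine and valid for an arbitrary Lie group with a left-invariant metric. The genuine gap is exactly at the point you yourself flag: you never prove that $\mfh$ acts on $\mfn$ by derivations, equivalently that $\mfn$ is an ideal of the full isometry algebra, equivalently that $N$ is normal in $\Isom(N,g)$. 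Saying that you ``would derive it by combining the Killing equation for $Z$ with the explicit Levi--Civita connection formula'' and that ``nilpotency must be invoked here to control the connection terms'' is a statement of intent, not an argument. Since this step is the entire mathematical content of Wilson's theorem --- it is precisely what fails for non-nilpotent groups, as your own hyperbolic-space example shows --- a write-up that defers it has established nothing beyond generalities.

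Two further cautions. First, the route you gesture at is unlikely to close on its own: a Killing field $Z$ vanishing at $e$ is determined by $A_Z=-(\nabla Z)_e$, so what you must show is that every skew-symmetric endomorphism of $(\mfn,\ip)$ arising from such a field (equivalently, by homogeneity and analyticity, every skew-symmetric endomorphism annihilating the full family of tensors $\nabla^k R|_e$) is a derivation of $\mfn$; no direct pointwise manipulation of the Killing equation together with the Koszul formula is known to produce this. The proof in \cite{Wil} --- note that the paper under review gives no proof at all, since the lemma is quoted from Wilson --- is structural rather than computational: compactness of the isotropy group is played against nilpotency (e.g., nontrivial elements of $\mfh$ act semisimply while elements of a nilpotent ideal act nilpotently) to show that the simply transitive nilpotent group of isometries is unique and coincides with the nilradical of the isometry group, after which normality is immediate because any conjugate $\varphi N\varphi^{-1}$ is again a transitive nilpotent group of isometries. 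Second, your side remark that $\mfn=\Nil(\mathfrak{i})$ once $\mfn$ is an ideal is correct but does no work in your logical order; in the known proofs, statements of this structural kind are the engine that produces normality, not consequences extracted after it.
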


\begin{notarem}\label{nota.vz} Let $(N,g)$ be a two-step Riemannian nilmanifold.
In the notation of Lemma~\ref{lem.wilson}, write $\mfn=\mfv+\mfz$ where $\mfz$ is the center of $\mfn$ and $\mfv=\mfz^\perp$ relative to $\ip$.
Note that each skew-symmetric derivation in $\mfh$ leaves each of $\mfv$ and $\mfz$ invariant.

\end{notarem}

\begin{prop}\label{proalgonilp1}  Suppose that $(N,g)$ is a G.O. nilmanifold, $\ip$ the associated inner product on $\mfn$, and $\mfh$ the
space of skew-symmetric derivations of $(\mfn,\ip)$.   We use the notation of~\ref{nota.vz}.
\begin{enumerate}
\item Let $\mfv=\mfv_1\oplus\cdots\oplus\mfv_k$ be an orthogonal decomposition of $\mfv$ into $\mfh$-invariant subspaces.
Then $[\mfv_i,\mfv_j]=\{0\}$ for all $i\neq j$.   In particular, $\mfv_i +[\mfv_i,\mfv_i]$ is an ideal in $\mfn$ for each~$i$.
{\rm(}These ideals are not disjoint in general; their derived algebras may overlap or even coincide.{\rm)}
\item Every $\mfh$-invariant subalgebra of $\mfn$ is an ideal.

\end{enumerate}
\end{prop}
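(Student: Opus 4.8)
The plan is to first distill a single analytic consequence of the geodesic orbit property and then deduce both statements by linear algebra. Realize $(N,g)$ as $N\rtimes H$ with $\mfg=\mfn+\mfh$, where $\mfh$ is the algebra of skew-symmetric derivations (Lemma~\ref{lem.wilson}), and recall from Notation~\ref{nota.vz} that every $D\in\mfh$ preserves the orthogonal splitting $\mfn=\mfv\oplus\mfz$. Since a G.O. nilmanifold is at most two-step, $[\mfn,\mfn]\subset\mfz$, so $[\mfv,\mfv]\subset\mfz$ and $[\mfn,\mfz]=0$. I will apply the G.O. criterion (Lemma~\ref{G.O.-criterion}) relative to the reductive complement $\mfm=\mfn$; this is legitimate because the criterion is a reformulation of the intrinsic geodesic-orbit condition and $\mfn$ is an $\Ad(H)$-invariant complement to $\mfh$ carrying the given inner product. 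For $W\in\mfz$ it is convenient to write $j(W)$ for the skew-symmetric operator on $\mfv$ determined by $\la j(W)X,Y\ra=\la W,[X,Y]\ra$ for $X,Y\in\mfv$.

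The key claim is: for every $X\in\mfv$ and $W\in\mfz$ there is a skew-symmetric derivation $D\in\mfh$ with $DX=j(W)X$. To see this I apply Lemma~\ref{G.O.-criterion} to $X+W\in\mfn=\mfm$, obtaining $D\in\mfh$ with $\la[\,X+W+D,Y\,]_{\mfn},\,X+W\ra=0$ for all $Y\in\mfn$. Evaluating on $Y\in\mfv$ and using two-step nilpotency, the invariance $DY\in\mfv$, and the skew-symmetry of $D$, the identity simplifies to $\la j(W)X-DX,\,Y\ra=0$ for all $Y\in\mfv$; since $DX\in\mfv$ this yields $DX=j(W)X$. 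I expect this bracket computation---keeping the $\mfv$- and $\mfz$-components separate and tracking the skew-symmetry---to be the one genuinely delicate point, since everything afterward is formal.

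Granting the claim, part (1) is short. For $X\in\mfv_i$ and any $W\in\mfz$ the derivation above gives $j(W)X=DX\in\mfv_i$, because $\mfv_i$ is $\mfh$-invariant and $D\in\mfh$; hence each $j(W)$ preserves every $\mfv_i$. Consequently, for $i\neq j$, $X\in\mfv_i$, $Y\in\mfv_j$ and any $W\in\mfz$ we get $\la W,[X,Y]\ra=\la j(W)X,Y\ra=0$, since $j(W)X\in\mfv_i\perp\mfv_j$; as $[X,Y]\in\mfz$ and $W$ is arbitrary, $[X,Y]=0$, so $[\mfv_i,\mfv_j]=\{0\}$. The ``in particular'' assertion is then immediate: $[\mfn,\mfv_i]=[\mfv,\mfv_i]=[\mfv_i,\mfv_i]$ by the vanishing of the cross-brackets, while $[\mfn,[\mfv_i,\mfv_i]]=0$ because $[\mfv_i,\mfv_i]\subset\mfz$, so $\mfv_i+[\mfv_i,\mfv_i]$ is an ideal.

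For part (2), let $\mfa$ be an $\mfh$-invariant subalgebra. As $\mfh$ preserves $\mfv$ and $\mfz$, the projection $\mfa_{\mfv}:=\pr_{\mfv}(\mfa)$ is $\mfh$-invariant, so $\mfv=\mfa_{\mfv}\oplus\mfa_{\mfv}^{\perp}$ is an orthogonal $\mfh$-invariant decomposition and part (1) gives $[\mfa_{\mfv}^{\perp},\mfa_{\mfv}]=\{0\}$. For $X\in\mfn$ and $Y\in\mfa$, two-step nilpotency gives $[X,Y]=[\pr_{\mfv}X,\pr_{\mfv}Y]$; writing $\pr_{\mfv}X=X'+X''$ with $X'\in\mfa_{\mfv}$ and $X''\in\mfa_{\mfv}^{\perp}$, the term $[X'',\pr_{\mfv}Y]$ vanishes because $\pr_{\mfv}Y\in\mfa_{\mfv}$, while $[X',\pr_{\mfv}Y]=[a,Y]$ for any $a\in\mfa$ with $\pr_{\mfv}a=X'$, and this lies in $\mfa$. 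Hence $[X,Y]\in\mfa$, so $\mfa$ is an ideal. The only points needing care are the existence of such an $a$ and the fact that the central corrections to $[a,Y]$ drop out, both of which are immediate from $[\mfn,\mfz]=0$.
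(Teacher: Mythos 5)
Your proof is correct and follows essentially the same route as the paper: both arguments apply Lemma~\ref{G.O.-criterion} to the vector $X+W$ with $X\in\mfv_i$, $W\in\mfz$, kill the derivation term using $\mfh$-invariance, skew-symmetry, and orthogonality to conclude $\la [X,Y],W\ra=0$, and both prove part (2) by projecting the subalgebra to $\mfv$ and invoking part (1). Your only departure is organizational: you factor the computation through the operator identity $DX=j(W)X$ (so that each $j(W)$ preserves every $\mfh$-invariant subspace), whereas the paper reaches the same cancellation directly without introducing the $j$-operator.
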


\begin{proof}
(1)  It suffices to show that if $\mfv_1$ is an $\mfh$-invariant subspace, then $[\mfv_1,\mfv\ominus \mfv_1]=\{0\}$.   Let $X\in\mfv_1$ and $W\in \mfz$.     By Lemma~\ref{G.O.-criterion}, there exists $A\in\mfh$ such that $$\la [X+W+A,Y], X+W\ra=0$$
for all $Y\in\mfn$.
  Equivalently, since $W$ is central in $\mfn$ and $X\perp [\mfn,\mfn]$, we have
$$\la[X,Y],W\ra + \la [A,Y], X+W\ra=0.$$
For $Y\in\mfv\ominus \mfv_1$, the second term vanishes, so $\la [X,Y],W\ra=0$ for all $W\in \mfz$.   Hence, $[X,Y]=0$ and (1) follows.

(2) Let $\mfo$ be an $\mfh$-invariant subalgebra of $\mfn$.  Let $\pi:\mfn\to \mfv$ be the orthogonal projection with kernel $\mfz$ and let $\mfv_1=\pi(\mfo)$.   Since both $\mfo$ and $\mfv$ are $\mfh$-invariant, so is $\mfv_1$.  Noting that $[X,Y]=[\pi(X),\pi(Y)]$ for all $X,Y\in\mfn$, we have
$$[\mfn,\mfo]=[\mfv,\mfv_1]=[\mfv_1,\mfv_1]=[\mfo,\mfo]<\mfo$$
where the second equality follows from (1).
\end{proof}

 \subsection{G.O. manifolds that are diffeomorphic to $\R^n$.}\label{subsecrn}

\begin{notarem}\label{Iwasawa}
Recall that if $U$ is a connected semisimple Lie group of noncompact type and $K<U$ a subgroup such that $\mfk$ is a maximal compactly embedded subalgebra of $U$, then $U$ admits an Iwasawa decomposition $U=KS$, where $S$ is a simply-connected solvable Lie group and $K\cap S$ is trivial.   We will say that a solvable Lie group is an \emph{Iwasawa} group if it is a solvable Iwasawa factor  of some semisimple Lie group $U$.  In this case, $U/K$ with a left-invariant Riemannian metric is a Riemannian symmetric space.  \end{notarem}

\renewcommand{\theenumi}{\roman{enumi}}

\begin{theorem}\label{thm.solv} Let $(M,g)$ be an $n$-dimensional simply-connected G.O. manifold.   Then the following are equivalent:

\begin{enumerate}
\item  $(M,g)$ is a Riemannian solvmanifold; i.e., it admits a simply-transitive solvable group of isometries.

\item  $M$ is diffeomorphic to $\R^n$.

\item  In the notation of ~\ref{Iwasawa}, there exists an Iwasawa group $S$, a simply connected nilpotent Lie group $N$ commuting with $S$,
and a left-invariant metric $h$ on $S\times N$ so that $(S\times N, h)$ is isometric to $(M,g)$.   $N$ is at most 2-step nilpotent.
\end{enumerate}
\end{theorem}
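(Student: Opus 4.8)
I would prove the three statements equivalent cyclically, as $(iii)\Rightarrow(i)\Rightarrow(ii)\Rightarrow(iii)$, the only substantial implication being the last. For $(iii)\Rightarrow(i)$: $S\times N$ is a simply connected solvable Lie group, being the product of the solvable group $S$ and the nilpotent group $N$, and it acts on itself by left translations, which are isometries of the left-invariant metric $h$; hence $(M,g)$ is a solvmanifold. For $(i)\Rightarrow(ii)$: if a solvable Lie group $R$ acts simply transitively by isometries then the orbit map $R\to M$ is a diffeomorphism, so $R\cong M$ is simply connected; a simply connected solvable Lie group is diffeomorphic to $\R^{\dim R}$, whence $M\cong\R^n$.

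For $(ii)\Rightarrow(iii)$ I would take the realization $(M,g)=G/H$ with $G=\Isom(M,g)_0$ and $H$ the (connected, compact) isotropy group; since $M\cong\R^n$, $H$ is a maximal compact subgroup of $G$. Two structural reductions drive the argument. First, for this $G$ the auxiliary group $F$ of Proposition~\ref{prop.gf} is trivial: each $\overline{I_a}$ with $a\in F$ is an isometry fixing $o$ and lying in the identity component, hence in $H$, which by Proposition~\ref{prop.gf}(2) forces $F\subseteq C(G)$; as $\mff\cap C(\mfg)=0$ this gives $\mff=0$ and therefore $C_\mfm(\mfh)=C(\mfg)\subseteq\Nil(\mfg)$. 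Second, Proposition~\ref{gnc} yields $\mfg=\Lev(\mfg)_{\nc}\oplus\mfg'$, where $\Lev(\mfg)_{\nc}$ is an ideal commuting with $\Rad(\mfg)$ and $\mfg'=\Lev(\mfg)_{\cop}+\Rad(\mfg)$.

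I would then build the solvable algebra explicitly. Using the standard inclusion $[\mfg,\Rad(\mfg)]\subseteq\Nil(\mfg)$, the compact algebra $\mfh$ acts trivially on $\Rad(\mfg)/\Nil(\mfg)$, so its centralizer meets every coset; combined with $C_\mfm(\mfh)\subseteq\Nil(\mfg)$ this gives $\Rad(\mfg)=\Nil(\mfg)\oplus\mathfrak{t}$ with $\mathfrak{t}=\mfh\cap\Rad(\mfg)$. Applying Lemma~\ref{lem.compact} to the maximal compactly embedded subalgebra $\mfh\oplus C(\mfg)$ identifies $\mfh=\Lev(\mfg)_{\cop}\oplus\mfk_{\nc}\oplus\mathfrak{t}$, where $\mfk_{\nc}=\mfh\cap\Lev(\mfg)_{\nc}$ is maximal compact in $\Lev(\mfg)_{\nc}$; let $\Lev(\mfg)_{\nc}=\mfk_{\nc}\oplus\mfs$ be the corresponding Iwasawa decomposition. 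Setting $\mfr=\mfs\oplus\Nil(\mfg)$ — a solvable subalgebra, since $\mfs\subseteq\Lev(\mfg)_{\nc}$ commutes with $\Nil(\mfg)$ — one checks $\mfg=\mfh\oplus\mfr$. By Proposition~\ref{prop.gf}(4) (with trivial second factor) the connected subgroup with Lie algebra $\mfr$ acts transitively on $M$; since $\mfh\cap\mfr=0$ the isotropy is discrete, hence trivial because $M$ is simply connected, so the action is simply transitive. Writing $S=\exp(\mfs)$ and $N=\Nil(G)=\exp(\Nil(\mfg))$, the group $S\times N$ is then as required: $S$ is an Iwasawa group, $N$ commutes with $S$ and is made simply connected by the diffeomorphism $S\times N\cong M$, and $N$ is at most $2$-step nilpotent by Theorem~\ref{thm.nil}; pulling $g$ back gives the left-invariant metric $h$.

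The heart of the proof — and its main obstacle — is the separation of compact from solvable directions encoded in $\mfg=\mfh\oplus(\mfs\oplus\Nil(\mfg))$: one must know that \emph{all} compact and compact-toral directions of $\mfg$ already lie in the isotropy $\mfh$. This is exactly where the hypothesis $M\cong\R^n$ (equivalently, $H$ maximal compact) is indispensable, forcing $\Lev(\mfg)_{\cop}$, the maximal compact $\mfk_{\nc}$ of the noncompact Levi factor, and the toral part $\mathfrak{t}$ of the radical all into $\mfh$; dropping it would leave skew-symmetric directions in $\mfm$ that no simply transitive \emph{solvable} group could absorb. The two points demanding the most care are the vanishing of $F$ for $G=\Isom(M,g)_0$ (which rules out stray elements of $C_\mfm(\mfh)\setminus C(\mfg)$) and the verification that $\mfh\oplus C(\mfg)$ is genuinely a maximal compactly embedded subalgebra, so that Lemma~\ref{lem.compact} pins down $\mfh$ precisely; once these are in hand, the remaining identities are routine dimension bookkeeping.
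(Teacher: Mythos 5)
The implications (iii)$\Rightarrow$(i)$\Rightarrow$(ii) are handled exactly as in the paper, and your two opening reductions for (ii)$\Rightarrow$(iii) are sound: with $G=\Isom(M,g)_0$ the group $F$ must be trivial (this is precisely the paper's observation at the start of the proof of Theorem~\ref{thm.submers}, that otherwise Proposition~\ref{prop.gf} would produce a larger connected isometry group), hence $C_\mfm(\mfh)=C(\mfg)$, and the splitting $\Rad(\mfg)=\Nil(\mfg)\oplus\mathfrak{t}$ with $\mathfrak{t}=\mfh\cap\Rad(\mfg)$ is correct. (Note the paper proves (ii)$\Rightarrow$(iii) for an \emph{arbitrary} G.O. realization $G/H$, keeping a possibly nontrivial $\mff$ and absorbing $F$ into the nilpotent factor $N=\Nil(G)\times F$; your reduction to the full isometry group is a legitimate alternative route to statement (iii).)

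The genuine gap is the step where you claim that Lemma~\ref{lem.compact} ``identifies $\mfh=\Lev(\mfg)_{\cop}\oplus\mfk_{\nc}\oplus\mathfrak{t}$, where $\mfk_{\nc}=\mfh\cap\Lev(\mfg)_{\nc}$ is maximal compact in $\Lev(\mfg)_{\nc}$.'' Lemma~\ref{lem.compact} pins down the maximal compactly embedded subalgebra $\mfl=\mfh\oplus C(\mfg)$, giving $\mfl=(\mfl\cap\Lev(\mfg))\oplus(\mfl\cap\Rad(\mfg))$ for a compatible Levi factor; it does \emph{not} follow that $\mfh$ itself splits along the Levi decomposition, because the toral part of $\mfh$ may sit diagonally across $\mfl\cap\Lev(\mfg)$ and $C(\mfg)$. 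This tilting really occurs for G.O. manifolds diffeomorphic to $\R^n$: for the naturally reductive metrics on $\wt{\SL}(2,\R)$ (Example~\ref{ex.sl} and the remark following Theorem~\ref{thm.submers}), the identity component of the isometry group has Lie algebra $\mfg=\mathfrak{sl}(2,\R)\oplus\mff$ with $C(\mfg)=\mff\simeq\R$, and the isotropy algebra $\mfh$ is the \emph{diagonal} copy of $\so(2)$ inside $\so(2)\oplus\mff$ --- the straight copy $\so(2)\subset\mathfrak{sl}(2,\R)$ generates a noncompact line in this group, so only the tilted circle is compact. Here $\mfh\cap\Lev(\mfg)_{\nc}=\{0\}$, which is not maximal compact in $\mathfrak{sl}(2,\R)$; your recipe would force $\mfs=\mathfrak{sl}(2,\R)$, which is not solvable, and the asserted identity $\mfg=\mfh\oplus\mfr$ fails. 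The paper's proof is engineered exactly to avoid this: it sets $\mfk=\mfl\cap\Lev(\mfg)_{\nc}$ (maximal compact by compatibility), takes the Iwasawa decomposition $\Lev(\mfg)_{\nc}=\mfk+\mfs$ with respect to \emph{that} $\mfk$, and proves only the vector-space identity $\mfg=\mfh\oplus\bigl(\mfs+\Nil(\mfg)\bigr)\oplus\mff$, never claiming $\mfk\subset\mfh$; the directness of the sum comes from $\mfl\cap\bigl(\mfs+\Nil(\mfg)\bigr)=C(\mfg)$ and $\mfh\cap C(\mfg)=\{0\}$, and transitivity then comes from Proposition~\ref{prop.gf}(4). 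So your overall strategy is the right one, but the key decomposition must be run through $\mfl$ rather than through $\mfh$; as written, the construction of $\mfs$, and with it the heart of (ii)$\Rightarrow$(iii), breaks down on the paper's own example.
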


\renewcommand{\theenumi}{\arabic{enumi}}

\begin{remark}  Every $n$-dimensional simply-connected solvable Lie group is diffeomorphic to $\R^n$, so the implication ``(i) implies (ii)" is trivial.  However the converse is not true for arbitrary homogeneous Riemannian manifolds as the following example illustrates.

\end{remark}

\begin{example}\label{ex.sl}  The universal cover $\t{G}$ of $\SL(2,\R)$ is diffeomorphic to $\R^3$.  For a \emph{generic} left-invariant metric $g$ on $\t{G}$, the identity component of the full isometry is just $\t{G}$ itself.  Thus $(\t{G},g)$ satisfies (ii) but not (i).

$\SL(2,\R)$ has Iwasawa decomposition $KS$ with $K=\SO(2)$ and $S$ the upper triangular matrices of determinant one.  The Iwasawa decomposition of $\t{G}$ is $\t{K}S$, where $\t{K}\simeq \R$ is the universal cover of $\SO(2)$.   If $g$ is a left-invariant metric that is also $\Ad(\t{K})$-invariant, then $\t{G}\times\t{K}$ acts almost effectively and isometrically on $(\t{G},g)$ and the simply-connected solvable Lie group $S\times\t{K}$ acts simply transitively.   Thus condition (i) does hold in this case.   As an aside, we note that the left-invariant metrics on $\t{G}$ that are also $\Ad(\t{K})$-invariant are precisely the naturally reductive metrics.  (See \cite{Gor85}.)
\end{example}

\begin{proof}[Proof of Theorem~\ref{thm.solv}] As already noted, the implication (i) implies (ii) is true even without the G.O. hypothesis.  The fact that (iii) implies (i) is trivial, so it remains to show that (ii) implies (iii).

Let $(G/H,g)$ be a realization of $(M,g)$ as a G.O. space.  Let $\mfl$ be a maximal compactly embedded subalgebra of $\mfg$ containing $\mfh$ and let $\Lev(\mfg)$ be a semisimple Levi factor such that $(\lg, \mfm,\mfl)$ is a compatible triple as in Definition~\ref{nota.compat}.   We have the following decompositions:
\begin{itemize}
\item[(a)] $\lg=\lgnc\oplus\lgc$, where $\lgnc$ and $\lgc$ are semisimple of noncompact type and compact type, respectively.
\item[(b)] $\mfl =(\mfl\cap \lg)+(\mfl\cap \Rad(\mfg))$ by Definition~\ref{nota.compat}.
\item[(c)] $\mfl\cap \lg=\mfk +\lgc$, where $\mfk$ is a maximal compactly embedded subalgebra of $\lgnc$ (by maximality of $\mfl$).
\item[(d)] $\lgnc=\mfk +\mfs$, an Iwasawa decomposition.
\end{itemize}

The assumption that $G/H$ is diffeomorphic to $\R^n$ implies that $H$  contains a maximal compact semisimple subgroup of $G$.   Thus, $[\mfl,\mfl]\subset \mfh$ and so
$\mfl <\mfh + C_\mfg(\mfh) =\mfh + C_\mfm(\mfh)$ with $C_\mfm(\mfh)$ abelian.  (Here as usual $\mfm$ denotes the orthogonal complement of $\mfh$ with respect to the Killing form of $\mfg$.)
   As in Lemma~\ref{lem.norm}, write
$C_{\mfm}(\mfh)=C(\mfg)\oplus\mff$
where, now, $\mff$ is abelian.
Lemma~\ref{lem.skew} implies that $\mfh + C_\mfm(\mfh)$ is a compactly embedded subalgebra of  $\mfg$.   Thus by maximality of $\mfl$, we have
\begin{equation}\label{ell}\mfl =\mfh \oplus C_{\mfm}(\mfh)=\mfh\oplus C(\mfg)\oplus\mff.\end{equation}
We emphasize, again by Lemma~\ref{lem.skew}, that $\ad_\mfg(X)|_\mfm$ is skew-symmetric for all $X\in \mfl$.

Set
\begin{equation}\label{u}\mfu=\mfs +\Nil(\mfg).\end{equation}
\noindent{\bf Claim.}  $\mfg$ can be written as a vector space direct sum of subalgebras
\begin{equation}\label{claim} \mfg=\mfh +\mfu+\mff.\end{equation}

\smallskip
To see that the sum on the right hand side of Equation~(\ref{claim}) is a vector space direct sum, note that by (b)--(d) and Equation~(\ref{u}), we have $\mfl\cap \mfu =\mfl\cap \Nil(\mfg)$.    For $X\in \mfl\cap \Nil(\mfg)$, the operator $\ad_\mfg(X)$ is both semisimple and nilpotent, hence trivial.  Thus $\mfl\cap \Nil(\mfg)=C(\mfg)$.  Hence $\mfl\cap \mfu =C(\mfg)$ and so Equation~(\ref{ell}) implies that $(\mfh\oplus \mff)\cap \mfu=\{0\}$, as claimed.

We next show that the right hand side of Equation~(\ref{claim}) exhausts $\mfg$.  By Equation~(\ref{ell}) and the fact that $C(\mfg)<\Nil(\mfg)<\mfu$, we have
\begin{equation}\label{h}\mfh+\mfu+\mff=\mfl+\mfu.\end{equation}
 Following the proof of Corollary~\ref{cor.nilrad} and using Equation~(\ref{ell}), we have
\begin{equation}\label{1}\mfg=\mfh+ C_{\mfm}(\mfh) + [\mfh,\mfm]=(\mfh+ C(\mfg) +\mff) + [\mfh,\mfm]=\mfl + [\mfh,\mfm]\end{equation}
and
\begin{equation}\label{2}[\mfh,\mfm]< \Lev(\mfg)+\Nil(\mfg).\end{equation} By (b)--(d) and Equation~(\ref{u}), $\lg < \mfl +\mfu$.   Thus the claim follows
from Equations~(\ref{u}), (\ref{h}), (\ref{1}), and (\ref{2}).

We now apply Proposition~\ref{prop.gf}(4) with $\mfu=\mfs +\Nil(\mfg)$ and $\mfv=\mff$ to conclude that the Lie group
$U \times F$ acts transitively and isometrically on $(G/H,g)$, where $U=S\Nil(G)$ is the connected subgroup of $G$ with Lie algebra $\mfu$ and where $F$ acts by right translations.   The fact that $\mfh\cap (\mfu +\mff)$ is trivial implies that the isotropy subgroup of $U\times F$ is discrete.  Since $M$ is simply-connected, we conclude that the isotropy group is trivial, i.e., the action is simply transitive.   Simple-connectivity also implies that $S\cap \Nil(G)$ is trivial.  Moreover, by Proposition~\ref{gnc}, $\lgnc$ and hence $\mfs$ commutes with $\Rad(\mfg)$.  Thus $U$ is a Lie group direct product of $U=S\times \Nil(G)$.   Setting $N=\Nil(G)\times F$, we thus have $U\times
F=S\times N$.    Recalling that $\mff$ and hence $F$ is abelian, we see that $N$ is nilpotent.  As in Theorem~\ref{thm.nil}, $N$ has step size at most two.
\end{proof}

\begin{theorem}\label{thm.submers} Assume that $(M,g)$ satisfies the equivalent conditions of Theorem~\ref{thm.solv}.  Then $(M,g)$ is isometric to one of the following:
\begin{enumerate}
\item a Riemannian symmetric space of non-compact type;
\item a simply-connected Riemannian G.O. nilmanifold $(N,h)$ (necessarily of step size at most 2);
\item the total space of a Riemannian submersion $\pi:  M\to P$ with totally geodesic fibers, where the base is a Riemannian symmetric space of noncompact type and the fibers are isometric to a simply-connected Riemannian G.O. nilmanifold.\end{enumerate}
\end{theorem}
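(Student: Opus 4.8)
The plan is to exploit the simply-transitive solvable group $\wt S = S\times N$ furnished by Theorem~\ref{thm.solv}, to identify $M$ with $\wt S$ carrying a left-invariant metric, and to fiber by the cosets of the normal nilpotent factor $N=\Nil(G)\times F$. Under the identification $M\cong S\times N$, the orbits of $N$ are the fibers $\{s\}\times N$, and the quotient $P:=M/N$ is diffeomorphic to $S$, hence to $\R^{\dim\mfs}$. The three alternatives of the theorem then arise as the cases $N$ trivial (so $M=P$, giving (i)), $\mfs=0$ (so $M=N$, giving (ii)), and the generic case (giving (iii)).

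First I would show that $\pi:M\to P=M/N$ is a Riemannian submersion with totally geodesic fibers isometric to a G.O. nilmanifold. Since $N$ acts freely and properly, being a closed factor of the simply transitive $\wt S$, the quotient $P$ is a smooth manifold and the $N$-orbits foliate $M$. Declaring the horizontal distribution to be the orthogonal complement (with respect to $g$) of the vertical $N$-orbit tangent spaces, left-invariance of $g$ under $\wt S$ shows that translation along a fiber by $N$ preserves this distribution and covers the identity on $P$; hence the quotient inner product on $P$ is well defined and $\pi$ is a Riemannian submersion. For the fibers, I would realize $(M,g)$ as a geodesic orbit space for the group $\mathcal{G}=(G\times F)/\Delta(D)=G\rtimes\ov{\Inn}_F$ of Proposition~\ref{prop.gf}; this is legitimate because every geodesic is already an orbit of a one-parameter subgroup of $G\subset\mathcal{G}$. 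The subgroup $N=\Nil(G)\times F$ is normalized by the isotropy $i(H)\Delta(F)$, since the nilradical is characteristic and $\Delta(F)$ normalizes $\Nil(G)\times F$ as $F$ is abelian and normalizes the nilradical. Thus Lemma~\ref{lem.subman} applies: the $N$-orbit through $o$ is totally geodesic and is itself a G.O. manifold, namely the nilmanifold $(N,g|_N)$, which is at most $2$-step by Theorem~\ref{thm.solv}. Homogeneity of $\wt S$ transports this conclusion to every fiber.

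The heart of the matter is identifying $P$ as a symmetric space of noncompact type. Here I would let $L_{nc}:=\lGnc$ act on $M$ by isometries, which it does since $L_{nc}<G$. By Proposition~\ref{gnc}, $L_{nc}$ commutes with $\Rad(G)$ and hence with the left translations by $\Nil(G)$; as left and right translations commute, $L_{nc}$ also commutes with the right translations by $F$. Thus $L_{nc}$ centralizes $N$ and descends to an isometric action on $P=M/N$. This action is transitive because the Lie algebra of $L_{nc}\cdot N$ contains $\mfk+\mfs+\Nil(\mfg)+\mff\supseteq\mfm=T_oM$. The decisive point is that $P$ is contractible (diffeomorphic to $\R^{\dim\mfs}$), which forces the isotropy group of the $L_{nc}$-action on $P$ to be a maximal compact subgroup $K$ of $L_{nc}$. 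Consequently $P=L_{nc}/K$ carries an $L_{nc}$-invariant metric, and every invariant metric on the quotient of a connected semisimple Lie group of noncompact type by a maximal compact subgroup is a Riemannian symmetric metric of noncompact type: the Cartan involution induces the geodesic symmetry at the base point, which is an isometry because it acts as $-\Id$ on the isotropy complement and hence preserves any inner product there. This establishes (iii), and the degenerate cases yield (i) and (ii).

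The step I expect to be the main obstacle, and the conceptual crux, is precisely this identification of the base. The naive attempt to realize the symmetric space via a maximal compact $K$ of $L_{nc}$ sitting inside the original isotropy $H$ fails: as Example~\ref{ex.sl} shows for the universal cover of $\SL(2,\R)$, the relevant compact group may require right translations by $F$ and need not be contained in $H$. The device that circumvents this is to pass to the quotient $P=M/N$, where $L_{nc}$ descends simply because it commutes with $N$, and then to read off the maximal-compact isotropy from the contractibility of $P$ rather than from any a priori containment. A secondary technical point requiring care is the bookkeeping within $G\times F$ needed to apply Lemma~\ref{lem.subman} to the fibers and to verify transitivity of the induced action on $P$.
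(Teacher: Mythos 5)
Your construction of the fibration itself parallels the paper's (there, $G$ is taken to be the identity component of the full isometry group, which forces $\mff=0$, and the fibration is $G/H\to G/Q$ with $Q=\Nil(G)H$, whose fibers are exactly your $N$-orbits); the parts of your argument concerning the quotient metric, the totally geodesic fibers via Lemma~\ref{lem.subman}, and the transitivity of $\lGnc$ on the base are sound. The genuine gap is the step you yourself single out as decisive: the claim that contractibility of $P$ forces the isotropy of the $\lGnc$-action on $P$ to be a maximal \emph{compact} subgroup of $\lGnc$. The fibration argument behind that claim ($K_{\max}/K_P\to L/K_P\to L/K_{\max}$, so the compact manifold $K_{\max}/K_P$ is contractible, hence a point) presupposes that the isotropy $K_P$ is compact, and that is false in general: $\lGnc$ may have infinite center, in which case it does not act effectively on $P$ and its isotropy subgroups need not be compact. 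The paper's Example~\ref{ex.sl} --- the very example you cite --- is a counterexample to your intermediate claim: for the naturally reductive metric on $\t{\SL}(2,\R)$, even when $G$ is the identity component of the full isometry group one gets $\lGnc\cong\t{\SL}(2,\R)$, $N\cong\R$, $P$ the hyperbolic plane, and the isotropy of $\lGnc$ acting on $P$ equal to $\t{K}\cong\R$, which is noncompact, while $\t{\SL}(2,\R)$ contains no nontrivial compact subgroups at all. Passing to the image of $\lGnc$ in $\Isom(P)$ does not repair this without further work, since you would need that image to be closed before concluding that its isotropy is compact.

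What is true, and what the paper proves, is the Lie-algebraic version: the isotropy subalgebra of the action on the base is $\mfk$, a maximal \emph{compactly embedded} subalgebra of $\lgnc$, so that $P\cong\lGnc/K$ with $K$ the connected subgroup with Lie algebra $\mfk$; and every $\lGnc$-invariant metric on such a quotient is symmetric of noncompact type, because the Cartan involution of $\lgnc$ fixes $\mfk$ pointwise, hence descends to the (possibly infinite-centered) group $\lGnc$ and induces the geodesic symmetry at the base point --- this is your closing argument, but with ``maximal compactly embedded'' replacing ``maximal compact''. The paper obtains the identification of the isotropy not from topology but from the explicit decompositions already established in the proof of Theorem~\ref{thm.solv}: $\mfg=\mfh+\mfs+\Nil(\mfg)$ and $\mfq=\mfk+\lgc+\Rad(\mfg)$ for $\mfq$ the Lie algebra of $Q=\Nil(G)H$, together with Proposition~\ref{gnc} and the skew-symmetry of $\ad(\mfk)|_{\mfm}$ (which also yields $\mfp\perp\Nil(\mfg)$, i.e., the Riemannian submersion property). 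Substituting this computation for your contractibility argument closes the gap; the rest of your proof can stand.
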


\begin{proof}[Proof of Theorem~\ref{thm.submers}] In the proof of Theorem~\ref{thm.solv}, we started with an arbitrary realization $(G/H,g)$ of $(M,g)$ as
a G.O. space.    We now assume that $G$ is the identity component of the full isometry group of $(M,g)$ and continue to use the notation of the proof
of Theorem~\ref{thm.solv}.  We must now have $\mff=0$ since, otherwise, Proposition~\ref{prop.gf} would give us a larger isometry group.
Consequently, the maximal compactly embedded subalgebra~$\mfl$ in the proof satisfies
\begin{equation}\label{eq.l}\mfk+\lgc<\mfl =\mfh+C(\mfg)<\mfh+\Nil(\mfg)<\mfk+\lgc+\Rad(\mfg)\end{equation}
 by Equation~(\ref{ell}) and Equations (a)--(c) in the proof of Theorem~\ref{thm.solv}.  Moreover, by Equations~(\ref{u}) and (\ref{claim}), we have
 \begin{equation}\label{eq.hsn}\mfg=\mfh+\mfs+\Nil(\mfg),\end{equation}
 vector space direct sum.

First consider the case that $G$ is semisimple, i.e., $G=\Lev(G)$.   By Equation~(\ref{eq.l}) and the fact that $C(\mfg)=0$ in this case, we see that $\mfh$ is a maximal compactly embedded subalgebra of $\mfg$.   Consequently, $(G/H,g)$ must be a symmetric case of noncompact type. This is case (1) of the theorem.

Next consider the case that $\Lev(G)_{\nc}$ is trivial.    Then $S$ is trivial, so Theorem~\ref{thm.solv} immediately tells us that $(M,g)$ is isometric to a G.O. nilmanifold, giving us case (2).

We now consider the general case.  Let $Q=\Nil(G)H$.   By Lemma~\ref{lem.subman}, $Q/H$ is a totally geodesic submanifold of $G/H$ and is a G.O. manifold.    $\Nil(G)$ acts simply transitively on $Q/H$, so $Q/H\simeq (\Nil(G),h)$ where $h$ is the left-invariant metric on $\Nil(G)$ induced by $g$.

By Equations/Inequalities~(\ref{eq.l}) and (\ref{eq.hsn}), the Lie algebra $\mfq$ of $Q$ must be given by
\begin{equation}\label{eq.q}\mfq=\mfk+\lgc+\Rad(\mfg)\end{equation}
(Indeed, $\mfq$ is contained in the right hand side by Equation~(\ref{eq.l}); it must exhaust the right hand side by Equation~(\ref{eq.hsn}).)

Since $Q$ is a closed subgroup of $G$, the quotient $G/Q$ is a manifold and we have a submersion $G\to G/Q$.   Since $H <Q$, we get an induced submersion
$$
\pi:G/H\to G/Q.
$$
By Equation~(\ref{eq.q}), we can identify $G/Q$ with $\lGnc/K$, where $K$ is the connected subgroup of $\lGnc$ with Lie algebra $\mfk$.
Under the identification of $G/H$ with $M$, we thus have a submersion
$$
\pi:M\to \lGnc/K.
$$
The metric induced on the base by the metric $g$ on $M$ is left $\lGnc$-invariant and thus the base is a symmetric space of noncompact type.   The fibers are isometric to $Q/H$ with the induced metric and thus to $(\Nil(G),h)$.  They are totally geodesic by Lemma~\ref{lem.subman}.

It remains to be shown that $\pi$ is a Riemannian submersion.
Letting $\mfp$ be the orthogonal complement to $\mfk$ in $\Lev(\mfg)_{\nc}$ relative to the Killing form of $\lgnc$ (which is the same as the restriction to $\lgnc$ of the Killing form of $\mfg$), then $\lgnc=\mfk+\mfp$ is a Cartan decomposition of $\lgnc$.  The orthogonal complement $\mfm$ of $\mfh$ in $\mfg$ satisfies $\mfm=\mfp+\Nil(\mfg)$.  Since $\mfk<\mfl$, the proof of Theorem~\ref{thm.solv} shows that $\ad_\mfg(X)|_\mfm$ is skew-symmetric for all $X\in\mfk$.   By Proposition~\ref{gnc}, $[\mfk,\Nil(\mfg)]=0$.   Since $[\mfk,\mfp]=\mfp$, it follows that
$\mfp\perp \Nil(\mfg)$ and thus that $\pi$ is a Riemannian submersion.
\end{proof}

\begin{remark} In the statement of Theorem~\ref{thm.solv}, the metric on $S\times N$ need not be a product metric, and the induced metric on $S$ need
not be symmetric.    We only have that $S$ is a global section of the submersion in Theorem~\ref{thm.submers} and $N$ is isometric to the fiber.
As such, $S$ is diffeomorphic to the base but the induced metric on $S$ is in general not isometric to that of the base.

To illustrate this,
let us consider the naturally reductive manifold in Example \ref{ex.sl}, where the universal cover $\t{\SL(2,\R)}$ of $\SL(2,\R)$ is supplied with
a left-invariant and $\Ad(\t{K})$-invariant  Riemannian metric $g$ and the group $\t{\SL}(2,\R)\times \t{K}$ acts transitively and almost effectively.
In the language of the proof of Theorem~\ref{thm.solv},  $\t{K}$ plays the role of $F$ while $\Nil(G)$ is trivial, and so $N=F=\t{K}$.  We will henceforth
refer to the second factor of $\t{\SL}(2,\R)\times \t{K}$ as $F$, while keeping in mind that it is a copy of $\t{K}$.  There are two ways to see that the
Riemannian metric is not the product of the symmetric space metric (i.e., the hyperbolic metric) on the Iwasawa group $S$ and the Euclidean
metric on the one-dimensional group $F\simeq\R$.   If it were the product metric, then the identity component of the full isometry group would
be $\operatorname{PSL}(2,\R)\times F$, whereas the identify component of the full isometry group of the metric in Example~\ref{ex.sl}
is $(\t{\SL}(2,\R)\times F)/D$, where the effective kernel $D$ is the discrete center of $\t{\SL}(2,\R)$ diagonally
embedded in $\t{K}\times F < \t{\SL}(2,\R)\times F$.  To understand the metric more directly, let $\mathfrak{sl}(2,\R)=\mfk+\mfp$ be
the Cartan decomposition.   The isotropy subalgebra $\mfh$ of the full isometry algebra $\mathfrak{sl}(2,\R) \oplus\mff$ is a diagonally
embedded copy of $\mfk$ (recall that $\mff\simeq\mfk$).  The metric is naturally reductive with respect to the decomposition $\mfh +\mfm$,
where $\mfm = \mfp \oplus\mff$.   We do have $\mfp\perp \mff$, but $\mfm$ is identified with the tangent space only at the basepoint.
Now consider the metric induced on the simply-transitive group $S\times F$.   For $X\in\mfs$, write $X=X_\mfh +(X_\mfp + X_\mff)$,
decomposition of $X$ with respect to the decomposition $\mfh +\mfm=\mfh +(\mfp+\mff)$ of the full isometry algebra.
Observe that for those elements $X\in\mfs$ that do not lie in $\mfp$, the component $X_\mff$ is non-trivial.  Thus $\mfs$ is not orthogonal
to $\mff$ and the induced metric on $S$ is not the hyperbolic one.

\end{remark}

\medskip

{\bf Acknowlegment.}
The authors are grateful to the referee for helpful comments and suggestions that improved the presentation of this paper.

\vspace{-1mm}

\bibliographystyle{amsunsrt}

\vspace{1mm}

\end{document}